\documentclass[15pt]{article}

\textwidth15truecm \textheight21truecm \oddsidemargin=0cm

\usepackage{amsmath,amsfonts,amssymb,amsthm,amscd,mathrsfs}
\usepackage{enumitem}

\usepackage{yhmath}
\usepackage{graphicx} 

\newcounter{stepctr}
{\end{list}}

\newtheorem{thm}{Theorem}[section]
\newtheorem{prop}[thm]{Proposition}
\newtheorem{cor}[thm]{Corollary}

\theoremstyle{definition}

\newtheorem{ex}[thm]{Example}

\newtheorem{rema}[thm]{Remark}
\newtheorem{lem}[thm]{Lemma}
\newtheorem{prob*}{Open problem}
\newcommand{\demo}{\begin{proof}}

\newcommand{\A}{\mathcal{A}}

\newcommand{\R}{\ensuremath{\mathcal{R}}}

\newcommand{\N}{\mathbb{N}}

\newcommand{\C}{\mathbb{C}}

\usepackage{array,multirow,makecell}
\setcellgapes{1pt}
\makegapedcells
\newcolumntype{R}[1]{>{\raggedleft\arraybackslash }b{#1}}
\newcolumntype{L}[1]{>{\raggedright\arraybackslash }b{#1}}
\newcolumntype{C}[1]{>{\centering\arraybackslash }b{#1}}

\def\ll^2{{\mathcal L}(\ell^2(\N))}

\def\f^0x{{\mathcal F^0}(X) }

\pagestyle{myheadings} \markboth{\rm    \hss} {\hss\rm  Generalization of some   commutative perturbation results}
\title {\bf Generalization of some  commutative perturbation results }

\author{Zakariae Aznay, Abdelmalek Ouahab, Hassan Zariouh }

\date{}
\begin{document}
\maketitle \thispagestyle{empty}
\begin{abstract}\noindent\baselineskip=10pt
We study the stability of certain spectra under some algebraic conditions weaker than the commutativity and  we generalize many known    commutative perturbation results. 
\end{abstract}

 \baselineskip=15pt
 \footnotetext{\small \noindent  2020 AMS subject
classification: Primary  13AXX;  47A10; 47A11; 47A53; 47A55 \\
\noindent Keywords:  Pertubation theory, spectra} \baselineskip=15pt
\section{Introduction}
In  \cite{liu-wu-yu}, the authors  investigated how to explicitly express the Drazin inverse of the sum $(P+ Q)$ of two complex matrices $P$ and $Q,$ under the conditions $PQ \in \mbox{comm}(P)$  and $QP \in \mbox{comm}(Q),$  which are   weaker than the commutativity of $P$ with $Q.$  A few years later, Huihui Zhu et al.  \cite{zhu-chen-patricio}  obtained    the representations for the pseudo Drazin inverse of  the sum and the product of two elements of a  complex Banach algebra, under the same conditions. Recently,  H. Zou et al.    \cite{mosic-zou-chen}   extended the known expressions for the  generalized Drazin inverse of the product  and the sum  of two elements of a complex Banach algebra  by   considering the same conditions.\\
 In this paper, we    study  these conditions and  other  in a ring $\A,$ that are $ab \in \mbox{comm}(a),$   $ba \in \mbox{comm}(b),$ $ab \in \mbox{comm}(b)$  and $ba \in \mbox{comm}(a).$ After giving some algebraic results, we  focus on the Banach algebra of bounded linear operators $L(X)$ acting on the  complex Banach algebra $X.$ We  generalize some commutative  perturbation spectral results,  in particular, if   $N$ is nilpotent and $N\in\mbox{comm}_{r}(T)$ (i.e. $NT \in \mbox{comm}(T)$ and    $TN \in \mbox{comm}(N)$),  then      $\sigma_{*}(T)\setminus\{0\}=\sigma_{*}(T+N)\setminus\{0\},$ where  $\sigma_{*}\in\{\sigma_{p},\sigma_{p}^0,\sigma_{a}\}.$ If in addition     $N\in\mbox{comm}_{w}(T)$ (i.e. $N\in\mbox{comm}_{r}(T)$ and $N^{*} \in\mbox{comm}_{r}(T^{*})$), then   we deduce by duality  that   $\sigma_{*}(T)=\sigma_{*}(T+N),$  where $\sigma_{*}\in\{\sigma_{p},\sigma_{p}^{0},\sigma_{a},\sigma_{s},\sigma\}.$ This allows us to show that if $K$ is a power compact operator and 
$K\in \mbox{comm}_{w}(T)$, then       $\sigma_{*}(T)=\sigma_{*}(T+K),$ where  $\sigma_{*}\in\{\sigma_{uf},\sigma_{uw},\sigma_{ub}, \sigma_{e}, \sigma_{w},\sigma_{b} \}.$ Note that here we lose quite a few commutative properties, for example if $S\not\in\mbox{comm}(T)$ and $S\in\mbox{comm}_{w}(T),$ then  $S\not\in\mbox{comm}_{w}(T-\lambda I)$ for every  $\lambda \neq 0.$ 
\section{Terminology and preliminaries}
Let $\A$ be a ring and let $a\in \A.$ Denote by  $\mbox{comm}(a)$  the set of all elements that commute   with $a,$  by  $\mbox{comm}^{2}(a)=\mbox{comm}(\mbox{comm}(a))$ and by $\mbox{Nil}(\A)$ the nilradical of $\A.$ If in addition $\A$ is a complex Banach algebra with unit $e,$ then we means by $\sigma(a),$ $\mbox{acc}\,\sigma(a),$    $r(a)$ and $\mbox{exp}(a),$ the  spectrum, the accumulation point of $\sigma(a),$  the spectral radius of $a$   and the exponential of $a,$  respectively. We say that  $a$ is quasi-nilpotent  if $r(a)=0.$  In the case of $\A=L(X),$ the   algebra of all bounded linear operators acting on an infinite dimensional complex Banach space  $X,$    $T^{*},$ $\alpha(T)$ and  $\beta(T)$ means   respectively,     the dual of the operator $T\in \A,$   the dimension of the kernel $\mathcal{N}(T)$  and the codimension of the range $\R(T).$   Denote further  $\R(T^{\infty}):=\underset{n\geq0}{\bigcap}\R(T^{n})$ and $\mathcal{N}(T^{\infty}):=\underset{n\geq0}{\bigcup}\mathcal{N}(T^{n}).$ The ascent and the descent of $T$ are defined  by $p(T)=\inf\{n\in \mathbb{N}: \mathcal{N}(T^n) = \mathcal{N}(T^{n+1})\}$ (with $\mbox{inf}\emptyset=\infty$)    and  $q(T)= \inf\{n\in \mathbb{N}: \R(T^n) = \R(T^{n+1})\}.$  A  subspace $M$ of $X$ is $T$-invariant if $T(M)\subset M$ and   the restriction of $T$ on $M$ is denoted by $T_{M}.$   $(M,N) \in \mbox{Red}(T)$  if $M,$   $N$ are  closed $T$-invariant subspaces and  $X=M\oplus N$      ($M\oplus N$ means that $M\cap N=\{0\}$).    Let  $n\in\N,$   denote by   $T_{[n]}=T_{\mathcal{R}(T^{n})}$    and by     $m_T=\mbox{inf}\{n \in \N  :  \mbox{inf}\{\alpha(T_{[n]}),\beta(T_{[n]})\}<\infty\}$   the \textit{essential  degree} of   $T.$   $T$ is called upper semi-B-Fredholm  (resp., lower semi-B-Fredholm)  if the \textit{essential ascent} $p_{e}(T):=\mbox{inf}\{n \in \N  :  \alpha(T_{[n]})<\infty\}<\infty$ and $\R(T^{p_{e}(T)+1})$ is closed  (resp.,  the \textit{essential descent}  $q_{e}(T):=\mbox{inf}\{n \in \N  :  \beta(T_{[n]})<\infty\}<\infty$ and $\R(T^{q_{e}(T)})$ is closed). If  $T$ is   an upper or a lower  (resp.,  upper and   lower)  semi-B-Fredholm then $T$ it is called   \emph{semi-B-Fredholm} (resp., \emph{B-Fredholm})  and      its     index   is  defined   by $\mbox{ind}(T) = \alpha(T_{[m_{T}]})-\beta(T_{[m_{T}]}).$ $T$ is said to be an upper semi-B-Weyl (resp., a lower semi-B-Weyl, B-Weyl,  left Drazin invertible, right Drazin invertible, Drazin invertible) if  $T$ is an upper semi-B-Fredholm with $\mbox{ind}(T)\leq 0$  (resp., $T$ is a lower  semi-B-Fredholm with $\mbox{ind}(T)\geq 0,$ $T$ is a B-Fredholm with $\mbox{ind}(T)=0,$   $T$ is an upper semi-B-Fredholm and $p(T_{[m_{T}]})<\infty,$ $T$ is a lower  semi-B-Fredholm and $q(T_{[m_{T}]})<\infty,$  $p(T_{[m_{T}]})=q(T_{[m_{T}]})<\infty$).   If $T$ is  upper semi-B-Fredholm (resp.,  lower semi-B-Fredholm,    semi-B-Fredholm, B-Fredholm, upper semi-B-Weyl,  lower semi-B-Weyl, B-Weyl,  left Drazin invertible, right Drazin invertible, Drazin invertible) with essential degree $m_{T}=0,$ then $T$ is said to be  an upper semi-Fredholm (resp.,  lower semi-Fredholm,   semi-Fredholm, Fredholm, upper semi-Weyl, lower semi-Weyl, Weyl, upper semi-Browder, lower semi-Browder, Browder) operator.  $T$ is said to be  bounded below if $T$ is upper semi-Fredholm with $\alpha(T)=0.$ \\
$\sigma_{p}(T)$:  point  spectrum of $T$\\
$\sigma_{p}^{0}(T):=\{\lambda \in \sigma_{p}(T): \alpha(T-\lambda I )<\infty\}$\\
$\sigma_{a}(T)$:  approximatif  spectrum of $T$\\
   $\sigma_{s}(T)$:  surjectif spectrum of $T$\\
$\sigma_{e}(T)$:  essential  spectrum of $T$\\
   $\sigma_{uf}(T)$:  upper semi-Fredholm spectrum of $T$\\
   $\sigma_{lf}(T)$:  lower semi-Fredholm spectrum of $T$\\
   $\sigma_{w}(T)$: Weyl spectrum of $T$\\
   $\sigma_{uw}(T)$: upper semi-Weyl spectrum of $T$\\
   $\sigma_{lw}(T)$: lower semi-Weyl spectrum of $T$\\
   $\sigma_{b}(T)$:  Browder spectrum of $T$\\
   $\sigma_{ub}(T)$:  upper semi-Browder spectrum of $T$\\
   $\sigma_{lb}(T)$:  lower semi-Browder spectrum of $T$\\
   $\sigma_{gd}(T)=\mbox{acc}\,\sigma(T)$ is  the   generalized Drazin spectrum of  $T$\\
$\sigma_{g_{z}d}(T)=\mbox{acc}\,\mbox{acc}\,\sigma(T)$  is the   $g_{z}$-invertible   spectrum of $T$ \cite{aznay-ouahab-zariouh6}

\section{Generalization of Newton formula}
We start  this section by the next preliminary lemma that  will be play a crucial role in the sequel.
\begin{lem}\label{lem1} Let  $\A$ be a ring and  $a,b \in \A.$  The following statements  hold:
\begin{enumerate}[nolistsep]
\item[(I)] If $ab \in \mbox{comm}(a),$ then
\begin{enumerate}[nolistsep]
\item[(i)]   $a^{n}b \in \mbox{comm}(a^{m})$  for every  integers $n,m\geq1.$
\item[(ii)]    $(ab)^{n}=a^{n}b^{n}$ and $(ba)^{n}=ba^{n}b^{n-1}=(ba)(ab)^{n-1}=ba^{n-1}b^{n-1}a$ for every  integer $n\geq 2.$
\item[(iii)]   $a(a+b) \in \mbox{comm}(a).$
\end{enumerate}
\item[(II)]  If $ab \in \mbox{comm}(b),$ then
\begin{enumerate}[nolistsep]
\item[(i)]    $ab^{n} \in \mbox{comm}(b^{m})$  for every  integers $n,m\geq1.$
\item[(ii)]   $(ab)^{n}=a^{n}b^{n}$ and $(ba)^{n}=a^{n-1}b^{n}a=(ab)^{n-1}(ba)=ba^{n-1}b^{n-1}a$  for every  integer $n\geq 2.$
\item[(iii)]  $(a+b)b \in \mbox{comm}(b).$
\end{enumerate}
\item[(III)]    If $ab \in \mbox{comm}(a)$  and $ba \in \mbox{comm}(b),$ then
\begin{enumerate}[nolistsep]
\item[(i)]   $a^{n}b^{m} \in \mbox{comm}(a^{k})$   for every strictly positive   integers $n,m$ and $k.$
\item[(ii)]    $a^{n}- b^{n}+ ba^{n-1}- a^{n-1}b= (a^{n-1}+ ba^{n-2}+ b^2a^{n-3}+ \dots+ b^{n-2}a+ b^{n-1})(a- b)$ and $a^{n}- b^{n}+ b^{n-1}a- ab^{n-1}= (a^{n-1}+ a^{n-2}b+ a^{n-3}b^{2}+ \dots+ ab^{n-2}+ b^{n-1})(a- b)$   for every integer $n\geq 2.$
\item[(iii)]   $(a+b)a \in \mbox{comm}(a+b)$ and $b(a+b) \in \mbox{comm}(b).$
\end{enumerate}
\item[(IV)]    If $ab \in \mbox{comm}(b)$  and $ba \in \mbox{comm}(a),$ then
\begin{enumerate}[nolistsep]
\item[(i)] $a^{n}b^{m} \in \mbox{comm}(b^{k})$    for every strictly positive   integers $n,m$ and $k.$
\item[(ii)]     $a^{n}- b^{n}+ ab^{n-1}- b^{n-1}a= (a- b)(a^{n-1}+ ba^{n-2}+ b^2a^{n-3}+ \dots+ b^{n-2}a+ b^{n-1})$ and  $a^{n}- b^{n}+ a^{n-1}b- ba^{n-1}= (a- b)(a^{n-1}+ a^{n-2}b+ a^{n-3}b^{2}+ \dots+ ab^{n-2}+ b^{n-1})$    for every integer $n\geq 2.$
\item[(iii)] $a(a+b) \in \mbox{comm}(a+b)$ and  $(a+b)b \in \mbox{comm}(b).$
\end{enumerate}
\end{enumerate}
\end{lem}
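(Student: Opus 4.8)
The plan is to reduce each of the four hypotheses to the statement that one distinguished product commutes with a single generator, and then to use two elementary facts throughout: if $x\in\mbox{comm}(y)$ then $x\in\mbox{comm}(y^{k})$ for every $k$, and $\mbox{comm}(y)$ is closed under products. I will prove (I) in full and obtain (II) by the symmetric left--right (mirror) argument. Then (III) and (IV) are assembled from (I) and (II) applied to both orderings of the pair: indeed $ba\in\mbox{comm}(b)$ is the hypothesis of (I) for the pair $(b,a)$, and $ba\in\mbox{comm}(a)$ is the hypothesis of (II) for $(b,a)$, so (III) combines (I) for $(a,b)$ with (I) for $(b,a)$, while (IV) combines (II) for $(a,b)$ with (II) for $(b,a)$.

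For (I) I start from the defining relation $aba=a^{2}b$, which says $ab\in\mbox{comm}(a)$ and hence $ab\in\mbox{comm}(a^{m})$ for all $m$. Statement (i) then follows by writing $a^{n}b=a^{n-1}(ab)$ and sliding the factor $ab$ past $a^{m}$. For (ii), the identity $(ab)^{n}=a^{n}b^{n}$ is an induction: $(ab)^{n}=(ab)a^{n-1}b^{n-1}=a^{n-1}(ab)b^{n-1}=a^{n}b^{n}$, using that $ab$ commutes with $a^{n-1}$. Since $ab\in\mbox{comm}(a)$, so does each power $(ab)^{k}$, and the three expressions for $(ba)^{n}$ all come from the purely associative regrouping $(ba)^{n}=b(ab)^{n-1}a$ together with $(ab)^{n-1}=a^{n-1}b^{n-1}$ and the commutation of $(ab)^{n-1}$ with $a$. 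Finally (iii) is immediate, as $a(a+b)=a^{2}+ab$ is a sum of two elements of $\mbox{comm}(a)$.

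The core of (III) (and, mirrored, of (IV)) is two cross-relations. Writing the hypotheses as $aba=a^{2}b$ and $bab=b^{2}a$, I claim $ba^{i}b=b^{2}a^{i}$ and $ab^{j}a=a^{2}b^{j}$ for all $i,j\ge 0$. For the first, $ba^{i}b=b\,a^{i-1}(ab)=b(ab)a^{i-1}=(bab)a^{i-1}=b^{2}a^{i}$, where I used that $ab$ commutes with $a^{i-1}$ and then $bab=b^{2}a$; the second is symmetric, using that $ba$ commutes with $b^{j-1}$ and $aba=a^{2}b$. The relation $ab^{m}a=a^{2}b^{m}$ is exactly $ab^{m}\in\mbox{comm}(a)$, so (i) follows by writing $a^{n}b^{m}=a^{n-1}(ab^{m})$ and sliding as before, while (iii) reduces to checking $(a^{2}+ba)(a+b)=(a+b)(a^{2}+ba)$, whose obstruction terms are $a^{2}b-aba$ and $bab-b^{2}a$, both zero.

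The main obstacle is the pair of identities in (ii), which I treat as a \emph{twisted telescoping}. Abbreviating $S=a^{n-1}+ba^{n-2}+\dots+b^{n-1}$, a direct expansion gives $S(a-b)=a^{n}-b^{n}+\sum_{j=0}^{n-1}b^{j}\bigl(ba^{n-1-j}-a^{n-1-j}b\bigr)$; the term $j=0$ contributes exactly the stated correction $ba^{n-1}-a^{n-1}b$, and every term with $j\ge 1$ vanishes because $b^{j}\!\cdot ba^{i}=b^{j+1}a^{i}=b^{j}a^{i}b$ by left-multiplying the cross-relation $ba^{i}b=b^{2}a^{i}$ by $b^{j-1}$. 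The second identity is handled identically with the companion relation $ab^{j}a=a^{2}b^{j}$, and the two identities of (IV) are the left--right mirrors, using the cross-relations $ab^{i}a=b^{i}a^{2}$ and $ba^{i}b=a^{i}b^{2}$ valid under the hypotheses of (IV). The only delicate point is the bookkeeping showing that precisely the extreme term of each telescoped sum survives, which is why isolating the cross-relations in advance is the decisive step.
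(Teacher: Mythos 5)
Your proof is correct, and it covers every item of the lemma. The ingredients are the same elementary ones the paper uses, but your organization is genuinely different and in one place more complete. Where the paper runs nested inductions (the statements $P_{m}$, $Q_{m}$, $P_{k}$ in its proofs of (I)(i) and (III)(i)), you instead invoke the closure properties of commutants ($x\in \mbox{comm}(y)$ implies $x\in\mbox{comm}(y^{k})$, and $\mbox{comm}(y)$ is closed under products) and a single ``sliding'' step such as $a^{n}b^{m}a^{k}=a^{n-1}(ab^{m})a^{k}=a^{k}a^{n}b^{m}$; this is shorter and makes the reduction of (III)/(IV) to (I)/(II) applied to the pair $(b,a)$ transparent. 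The more substantive difference is in (III)(ii)/(IV)(ii): the paper dismisses these as ``an immediate consequence of (i),'' but (i) only yields commutation with powers of $a$ (resp.\ $b$), whereas the telescoping of $(a^{n-1}+ba^{n-2}+\dots+b^{n-1})(a-b)$ requires the relations $b^{j}a^{i}b=b^{j+1}a^{i}$, i.e.\ commutation of $b^{j}a^{i}$ with $b$. Your isolation of the cross-relations $ba^{i}b=b^{2}a^{i}$ and $ab^{j}a=a^{2}b^{j}$ (each using \emph{both} hypotheses of (III)), followed by the explicit bookkeeping showing that only the extreme term of the telescoped sum survives, supplies exactly the argument the paper omits. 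In short: same spirit, cleaner mechanism, and a welcome filling-in of the one step the paper leaves unjustified.
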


\begin{proof}
(I)  (i)  Let's use induction with the following statement  \[P_{m} :  \quad  a^{n}b  \in \mbox{comm}(a^{m}) \text{ for every integer } n\geq1.\]
   Let $n\geq 1$ be an integer  such that  $a^{n}ba=a^{n+1}b.$ Then $ a^{n+1}ba=a(a^{n}ba)=a(a^{n+1}b)=a^{n+2}b.$   So  $P_{1}$ holds.   Assume that $P_{m}$ holds for some integer $m\geq 1.$ Let $n\geq1$ be an integer, then  $(a^{n}b)a^{m+1}=((a^{n}b)a^{m})a=(a^{m}(a^{n}b))a=a^{m}((a^{n}b)a)=a^{m}(a(a^{n}b))=a^{m+1}(a^{n}b).$ So $P_{m+1}$ holds.
Consequently,   $a^{n}b \in \mbox{comm}(a^{m})$ for   every  integers $n,m\geq1.$\\
(ii) The equality $(ab)^{n}=a^{n}b^{n}$ is obvious. Let us  prove  by induction that  $(ba)^{n}=ba^{n}b^{n-1}$ for every integer $n\geq2.$ For $n=2$ the equality  holds.  Assume   that $(ba)^{n}=ba^{n}b^{n-1}$ for some integer $n\geq2,$ then we get from the first  point  (i) that  $(ba)^{n+1}=(ba)(ba)^{n}=b(aba^{n})b^{n-1}=b(a^{n+1}b)b^{n-1}=ba^{n+1}b^{n}.$   Consequently, the equality holds for every integer $n\geq2.$ On the other hand, since       $(ba)^{n}=b(ab)^{n-1}a$ is always true,   then $(ba)^{n}=ba^{n}b^{n-1}=(ba)(ab)^{n-1}=b(ab)^{n-1}a=ba^{n-1}b^{n-1}a$ for every  integer $n\geq 2.$ The point (iii) is trivial.\\
(II) The proof is identical  to that of (I).\\
(III) (i)  Let's  use induction  with the following  statement   \[P_{k} : a^{n}b^{m} \in \mbox{comm}(a^{k})  \text{ for every  strictly positive  integers } n \text{ and }m.\]
  $P_{1}$ is holds. Indeed,  we  consider the following statement  \[Q_{m} : a^{n}b^{m} \in \mbox{comm}(a)  \text{ for every strictly positive  integer } n.\]  From  the point   (i) of  the statement (I),  $Q_{1}$ holds. Assume now  that  $Q_{m}$ holds for some strictly integer $m$  and let $n$ be a strictly positive integer. Since $bab=b^{2}a,$ we conclude that   $a(a^{n}b^{m+1})=(a(a^{n}b^{m}))b=((a^{n}b^{m})a)b=a^{n}(b^{m}ab)=a^{n}(b^{m+1}a)=(a^{n}b^{m+1})a.$  So  $Q_{m+1}$ holds.\\
 Suppose  that $P_{k}$ holds for some strictly positive  integer $k$ and     let $n$ and   $m$ be two strictly positive integers.   The  statement  $P_{1}$ implies that  $(a^{n}b^{m})a^{k+1}=((a^{n}b^{m})a^{k})a=(a^{k}(a^{n}b^{m}))a=a^{k}((a^{n}b^{m})a)=a^{k}(a(a^{n}b^{m}))=a^{k+1}(a^{n}b^{m}).$ Thus  $P_{k+1}$ holds and this  completes the proof.\\
The point (ii)  is an immediate  consequence of (i), and the point (iii) is trivial.\\
(IV) Goes similarly with (III).
\end{proof}
\medskip
 Throughout this paper we consider  on a ring  $\A$ the sets     defined as follows
    $$\mbox{comm}_{l}(a)=\{b \in \A :  ab \in \mbox{comm}(a) \text{ and } ba  \in \mbox{comm}(b)\},$$
  $$\mbox{comm}_{r}(a) =\{b \in \A :    ab \in \mbox{comm}(b) \text{ and  } ba \in \mbox{comm}(a)\},$$
  $$\mbox{comm}_{w}(a)=\mbox{comm}_{l}(a)\cap\mbox{comm}_{r}(a).$$

\begin{ex}\label{s.ex}  Let $\A$ be a ring. Then for every $a,b \in \A$ we have
$$a \in \mbox{comm}(b) \Longrightarrow  b \in \mbox{comm}_{w}(a) \Longrightarrow b^{2}  \in \mbox{comm}(a) \text{ and } a^{2}  \in \mbox{comm}(b)$$
However, we show by  the following examples  that the reverse  of these implications are  not true in general.
\begin{enumerate}[nolistsep]
\item[(i)]   In  the matrix space $\mathcal{M}_{2}(\A),$  where $\A$ is a ring with non-null unit $e,$  the elements   $P=\left(\begin{matrix} 0 & e \\ 0 & 0  \end{matrix}\right)$ and $Q= \left(\begin{matrix} 0 & e \\ 0 & e  \end{matrix}\right)$ satisfy   $PQP=P^{2}Q=QP^{2}=QPQ=Q^{2}P\neq PQ^{2},$ but $PQ\neq QP.$ On the other hand, if we consider $S=\left(\begin{matrix} 0 & 0 \\ e & 0  \end{matrix}\right),$ then $S \in \mbox{comm}(P^{2})$ and $P \in \mbox{comm}(S^{2}),$ but $PS$ does not commute neither with $P$ nor with $S,$ and   $SP$ does not commute neither with $P$ nor with $S.$ 
\item[(ii)]   Hereafter $\ell^{2}$ denotes the Hilbert space $\ell^{2}(\N).$ We consider  in  the Banach algebra $L(\ell^{2}),$  the operators $T$ and $S$ defined by $T(x_1, x_2, \dots)=(x_1, x_1, x_3, x_4,  \dots)$ and $S(x_1, x_2, \dots)=(x_1, 0, \dots).$  Then  $S \in \mbox{comm}_{l}(T),$  but  $ST$ does not commute with  $T$ and  $TS$ does not commute with  $S.$ On the other hand, $S^{*} \in \mbox{comm}_{r}(T^{*}),$  but   $T^{*}S^{*}$ does not commute with  $T^{*}$ and  $S^{*}T^{*}$ does not commute with  $S^{*}.$ This shows  that the  conditions assumed in the statements (I) and (II) of Lemma \ref{lem1} are independent.     As another example, consider in    $\mathcal{M}_2(A),$    $M=\left(\begin{matrix} e & e \\ 0 & 0  \end{matrix}\right)$ and $N= \left(\begin{matrix} a & a \\ b & b  \end{matrix}\right).$ Then    $M\in\mbox{comm}_{l}(N)$   for every $a,b \in \A$ such that $b\neq 0$ and $a\neq -b.$ But $NM$ does not commute with  $M$ and  $MN$ does not commute with  $N.$
\item[(iii)] Let $T$ and $N$ be the operators defined on $\ell^{2}$  by $T(x_{1}, x_{2}, \dots)=(x_{2}, 0, \dots),$  $N(x_{1}, x_{2}, \dots)=(0, x_{1}, 0,\dots).$ Then $T \in \mbox{comm}(S^{2})$ and $S \in \mbox{comm}(T^{2}).$ But  $TN\notin \mbox{comm}(T)\cup \mbox{comm}(N)$ and $NT\notin \mbox{comm}(T)\cup \mbox{comm}(N).$
\item[(iv)] For the operators $N_{1}$ and $N_{2}$ defined  on $\ell^{2}$ by $N_{1}(x_1, x_2, \dots)=(0, x_{1}, x_{2}, 0, \dots),$  $N_{2}(x_1, x_2, \dots)=(0, -x_{1}, 0,  \dots),$  we have   $N_{1} \in \mbox{comm}_{w}(N_{2}),$  but   $N_{1} \notin \mbox{comm}(N_{2}).$
\item[(v)]    In  $\mathcal{M}_3(\C),$      $P=\left(\begin{matrix} 0 & 0 & 0 \\ \frac{1}{2} & 0 & 0 \\ 0 & \frac{1}{3} & 0  \end{matrix}\right)$ and $Q=\left(\begin{matrix} 0 & 0 & 0 \\ -\frac{1}{2} & 0 & 0 \\ 0 & 0 & 0  \end{matrix}\right)$ satisfy   $PQP=P^{2}Q=QP^{2}=QPQ=Q^{2}P=PQ^{2}=0,$ so that $P \in\mbox{comm}_{w}(Q),$  but $PQ\neq QP.$ 
\end{enumerate}
\end{ex}
A simple check, one can easily obtain the dressed results in the following remark.
\begin{rema}\label{rema1} Let  $\A$ be a ring with unit $e.$ For      $a,b \in \A$ and $\mu,\lambda \in \C,$  the following statements  hold:\\
(i)  If $ab \in \mbox{comm}(a),$ then     $(a-\lambda e)(b-\mu e) \in \mbox{comm}(a-\lambda e)$ if and only if $a \in \mbox{comm}(b)$ or $\lambda =0.$ \\
(ii)  If $ba \in \mbox{comm}(a),$ then   $(b-\mu e)(a-\lambda e) \in \mbox{comm}(a-\lambda e)$ if and only if $a \in \mbox{comm}(b)$ or $\lambda =0.$\\
(iii)   If $b \in \mbox{comm}_{w}(a),$ then $a^{n} \in \mbox{comm}(b^{m})$ for every integers $n,m\geq 1$ such that  $nm\geq 2.$\\
(iv)  If $b \in \mbox{comm}_{l}(a)$ (resp., $b \in \mbox{comm}_{r}(a),$ $b \in \mbox{comm}_{w}(a)$), then $(a+b) \in \mbox{comm}_{l}(b)$   (resp., $(a+b) \in \mbox{comm}_{r}(b),$ $(a+b) \in \mbox{comm}_{w}(b)$).\\
(v)   If $aba=a^{2}b=ba^{2},$ then $a^{n} \in \mbox{comm}(b^{m})$ for every  $m\geq 1$ and  $n\geq 2.$
\end{rema}
Let $\A$ be a  unital complex Banach algebra and $B\subset \A.$  The exponential of $a \in \A$ is defined by   $
\mbox{exp}(a)=\displaystyle\sum_{n=0}^{\infty}\frac{a^{n}}{n!}.$  In the next  we denote by  $$C_{1}(B)=\{a\in \A: \forall b\in B,\,   a \in \mbox{comm}(ab)\cup\mbox{comm}(ba) \text{ or } b\in \mbox{comm}(ba)\},$$  $$C_{2}(B)=\{a\in \A: \forall b\in B,\,   a \in \mbox{comm}(ab)\cup\mbox{comm}(ba) \text{ or } b\in \mbox{comm}(ab)\},$$  $$C_{3}(B)=\{a\in \A: \forall b\in B,\,   b \in \mbox{comm}(ab)\cup\mbox{comm}(ba)\}.$$
\begin{thm} If  $\A$ is a unital complex Banach algebra  and  $\A=C_{i}(\A),$ i=1, 2 or 3, then  $\A$ is commutative.
\end{thm}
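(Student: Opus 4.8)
The plan is to translate every membership $x\in\mbox{comm}(yz)$ occurring in the three conditions into an annihilation identity for the single commutator $c:=ab-ba$, and then to exploit the fact that a complex line cannot be covered by finitely many points. A direct computation gives the four equivalences
$$a\in\mbox{comm}(ab)\iff ac=0,\qquad a\in\mbox{comm}(ba)\iff ca=0,$$
$$b\in\mbox{comm}(ab)\iff cb=0,\qquad b\in\mbox{comm}(ba)\iff bc=0,$$
so that, writing $c=ab-ba$, the hypothesis $\A=C_{1}(\A)$ reads ``$ac=0$ or $ca=0$ or $bc=0$ for all $a,b$'', the hypothesis $\A=C_{2}(\A)$ reads ``$ac=0$ or $ca=0$ or $cb=0$'', and $\A=C_{3}(\A)$ reads ``$cb=0$ or $bc=0$''. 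In every case the goal is to deduce $c=0$ for all $a,b$.

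The crucial device is that replacing $b$ by $b+\lambda e$ (or $a$ by $a+\mu e$), with $\lambda,\mu\in\C$, leaves the commutator $c$ unchanged, while turning each disjunct into an affine function of the scalar with the \emph{same} linear part $c$. Concretely, I would record the elementary observation: if $u,v\in\A$ and for every $\lambda\in\C$ one has $u+\lambda c=0$ or $v+\lambda c=0$, then $c=0$. Indeed, if $c\neq0$ then each of the equations $u+\lambda c=0$ and $v+\lambda c=0$ has at most one solution $\lambda$ (subtracting two solutions of the same equation yields $(\lambda_{1}-\lambda_{2})c=0$, whence $c=0$ since nonzero scalars are invertible), so their two solution sets together contain at most two points and cannot exhaust the infinite set $\C$.

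For $C_{3}$ this finishes the proof in one stroke: applying the defining condition to the pair $(a,b+\lambda e)$, whose commutator is still $c$, gives $cb+\lambda c=0$ or $bc+\lambda c=0$ for every $\lambda$, and the observation with $u=cb$, $v=bc$ forces $c=0$. For $C_{1}$ and $C_{2}$ I would proceed in two steps. First, fixing $a,b$ and applying the condition to $(a,b+\lambda e)$, the two ``$a$-disjuncts'' $ac=0$ and $ca=0$ are independent of $\lambda$; if both failed, the remaining disjunct ($bc+\lambda c=0$ for $C_{1}$, respectively $cb+\lambda c=0$ for $C_{2}$) would have to hold for all $\lambda$, which is impossible unless $c=0$, and $c=0$ contradicts $ac\neq0$. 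Hence for all $a,b$ one has $ac=0$ or $ca=0$. Second, I apply this intermediate statement to the shifted pair $(a+\mu e,b)$, again of commutator $c$, to obtain $ac+\mu c=0$ or $ca+\mu c=0$ for every $\mu\in\C$; the observation with $u=ac$, $v=ca$ then yields $c=0$.

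The only genuine subtlety, and the place where the argument could go wrong, is the non-linear (quadratic in $b$) nature of the disjuncts $b\in\mbox{comm}(ab)$ and $b\in\mbox{comm}(ba)$, which blocks a naive ``a vector space is not a finite union of proper subspaces'' argument. Rephrasing everything through the fixed commutator $c$ and shifting by scalar multiples of the unit is precisely what linearises the problem: it freezes $c$ while exposing an affine dependence on the scalar, after which the ``a line is not two points'' observation closes every case. I note that this uses only that $\A$ is a unital algebra over $\C$ (nonzero scalars are invertible and $\C$ is infinite), so the full Banach structure is not actually needed for this statement.
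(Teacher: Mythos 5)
Your proof is correct, and it takes a genuinely different route from the paper's. You linearise the problem algebraically: each membership occurring in the definitions of $C_{1},C_{2},C_{3}$ is equivalent to one of $ac=0$, $ca=0$, $cb=0$, $bc=0$ for the single commutator $c=ab-ba$, and the substitutions $b\mapsto b+\lambda e$ and $a\mapsto a+\mu e$ freeze $c$ while turning the offending disjuncts into affine expressions $u+\lambda c$ in the scalar; everything then falls to the observation that, when $c\neq 0$, each equation $u+\lambda c=0$ has at most one root, so finitely many such equations cannot cover $\C$. The paper instead applies the hypothesis to the pair $x_{\lambda}=\exp(\lambda a)$, $y_{\lambda}=\exp(-\lambda a)b$, obtaining for each $\lambda$ one of finitely many identities between entire functions of $\lambda$, and then extracts $ab-ba$ from the expansion $\exp(\lambda a)b\exp(-\lambda a)=\sum_{n\geq 0}\frac{\lambda^{n}}{n!}\delta_{a}^{n}(b)$. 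Your argument buys more for less: it is purely algebraic; it avoids the step, left implicit in the paper, by which ``for each $\lambda$ one of two analytic identities holds'' is upgraded to ``one of them holds for all $\lambda$'' (an identity-theorem/Baire argument); and, as you note, it proves the theorem for any unital algebra over an infinite field (a field with at least three elements already suffices), whereas the exponential method is tied to the Banach-algebra setting. What the paper's conjugation trick offers in exchange is chiefly its familiarity from classical commutativity arguments; for this particular statement your shift-by-scalars device is the sharper and more self-contained tool.
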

\begin{proof}  Let $a,b\in \A$ and let $\lambda \in \C.$ Consider $x_{\lambda}=\mbox{exp}(\lambda a)$ and $y_{\lambda}=\mbox{exp}(-\lambda a)b.$ Assume that  $\A=C_{1}(\A),$  then    $b\mbox{exp}(\lambda a)=x_{\lambda}y_{\lambda}x_{\lambda}\in\{x_{\lambda}^{2}y_{\lambda},y_{\lambda}x_{\lambda}^{2}\}=\{\mbox{exp}(\lambda a)b,\mbox{exp}(-\lambda a)b\mbox{exp}(2\lambda a)\}$ or $\mbox{exp}(-\lambda a)b^{2}=y_{\lambda}x_{\lambda}y_{\lambda}=y_{\lambda}^{2}x_{\lambda}=\mbox{exp}(-\lambda a)b\mbox{exp}(-\lambda a)b\mbox{exp}(\lambda a).$  Hence $b^{2}=b\mbox{exp}(-\lambda a)b\mbox{exp}(\lambda a)$ for all $\lambda \in \C.$  Moreover, $$b\mbox{exp}(\lambda a)b\mbox{exp}(-\lambda a)=b\displaystyle\sum_{n=0}^{\infty}\frac{\lambda^{n}}{n!}\left(\displaystyle\sum_{k=0}^{n}C_{n}^{k}a^{k}b(-a)^{n-k}\right)=\displaystyle\sum_{n=0}^{\infty}\frac{\lambda^{n}}{n!}b(\delta_{a})^{n}(b),$$
where ${\displaystyle C_{n}^{k}={\frac {n!}{k!\,(n-k)!}}}$   and  $\delta_{a}(b)=ab-ba.$  Thus $bab=b^{2}a$ for every $a,b\in \A.$  By the similar  arguments, we get      $b=\mbox{exp}(\lambda a)b\mbox{exp}(-\lambda a)=\displaystyle\sum_{n=0}^{\infty}\frac{\lambda^{n}}{n!}\left(\displaystyle\sum_{k=0}^{n}C_{n}^{k}a^{k}b(-a)^{n-k}\right)=\displaystyle\sum_{n=0}^{\infty}\frac{\lambda^{n}}{n!}(\delta_{a})^{n}(b),$ for all $\lambda \in \C.$ Therefore $ab=ba$ and  $\A$ is commutative. The  case of $\A=C_{2}(\A)$ is  analogous. For the case of   $\A=C_{3}(\A),$ it suffices to use that same arguments with    $x_{\lambda}=\mbox{exp}(\lambda a)$ and $y_{\lambda}=b\mbox{exp}(-\lambda a).$
\end{proof}
\begin{thm}\label{thmp} Let $\A$ be a ring. For every  $a,b\in \A$ we have\\
(i) If     $b \in \mbox{comm}_{r}(a),$ then  for every integer $n>0,$  $\displaystyle(a+b)^{n}=\sum_{k=1}^{n} C_{n-1}^{k-1}\left(a^{n-k}b^{k}+b^{n-k}a^{k}\right).$\\
(ii)  If $b \in \mbox{comm}_{l}(a),$ then for every integer $n>0,$  $(a+b)^{n}=\displaystyle\sum_{k=1}^{n} C_{n-1}^{k-1}\left(a^{k}b^{n-k}+b^{k}a^{n-k}\right).$\\
Where   $a^{0}$ designates   the unit element of $\A.$
\end{thm}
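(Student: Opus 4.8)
The plan is to fix the hypothesis $b \in \mbox{comm}_{r}(a)$ (so that $ab \in \mbox{comm}(b)$ and $ba \in \mbox{comm}(a)$) and prove (i) by induction on $n$; part (ii) is then settled by the identical scheme, the only change being the source of the straightening identities. The base case $n=1$ is immediate, since the right-hand side collapses to $a+b$. For the inductive step I would write $(a+b)^{n+1}=(a+b)^{n}(a+b)$, insert the formula for $(a+b)^{n}$, and distribute on the right, producing for each $k$ the four monomials $a^{n-k}b^{k}a$, $\ a^{n-k}b^{k+1}$, $\ b^{n-k}a^{k+1}$ and $\ b^{n-k}a^{k}b$.

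The two interior monomials already have an admissible shape $a^{p}b^{q}$ or $b^{p}a^{q}$, but the two outer ones are \emph{mixed} (an $a$-block, then a $b$-block, then a single trailing $a$, and symmetrically). The heart of the proof is to straighten these, and this is exactly where the weakened hypotheses enter. Here I would invoke Lemma \ref{lem1}: since the relations defining $\mbox{comm}_{r}(a)$ are invariant under interchanging $a$ and $b$, part (IV)(i) of that lemma applies in both orders and gives, for all $i\geq 0$ and $j\geq 1$, the identities $a^{i}b^{j}a=b^{j}a^{i+1}$ and $b^{i}a^{j}b=a^{j}b^{i+1}$ (the degenerate case $i=0$ being trivial and the cases $i\geq 1$ coming from $b^{n}a^{m}\in\mbox{comm}(a^{k})$ and $a^{n}b^{m}\in\mbox{comm}(b^{k})$ with $m=1$). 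Applying these with $i=n-k$, $j=k$ turns $a^{n-k}b^{k}a$ into $b^{k}a^{n-k+1}$ and $b^{n-k}a^{k}b$ into $a^{k}b^{n-k+1}$, so that every summand becomes admissible.

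After this substitution the expansion is manifestly symmetric under $a\leftrightarrow b$, so it suffices to track the coefficient of a generic monomial $a^{n+1-j}b^{j}$ and appeal to symmetry for the $b^{n+1-j}a^{j}$ terms. This monomial receives exactly two contributions, one from the family $a^{n-k}b^{k+1}$ (with coefficient $C_{n-1}^{j-2}$) and one from the family $a^{k}b^{n-k+1}$ (with coefficient $C_{n-1}^{j-1}$, after rewriting $C_{n-1}^{n-j}=C_{n-1}^{j-1}$). Their sum $C_{n-1}^{j-2}+C_{n-1}^{j-1}$ equals $C_{n}^{j-1}$ by Pascal's rule, which is precisely the coefficient demanded by the formula at level $n+1$; the boundary indices $j=1$ and $j=n+1$ are absorbed by the convention $C_{m}^{\ell}=0$ for $\ell<0$ or $\ell>m$. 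This closes the induction for (i).

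For (ii) the relations defining $\mbox{comm}_{l}(a)$ (namely $ab\in\mbox{comm}(a)$ and $ba\in\mbox{comm}(b)$) are again symmetric in $a,b$, and the same distribute–straighten–recombine scheme works verbatim; the only difference is that the straightening identities now read $a^{i}b^{j}a=a^{i+1}b^{j}$ and $b^{i}a^{j}b=b^{i+1}a^{j}$, which follow from Lemma \ref{lem1}(III)(i) (and its $a\leftrightarrow b$ version) via $a^{n}b^{m}\in\mbox{comm}(a^{k})$ with $k=1$. The Pascal bookkeeping is then identical. I expect the genuinely delicate point to be the straightening of the mixed monomials: one must pick the correct identity in the correct order and verify it in the degenerate case $n-k=0$. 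Once all monomials are admissible, the rest is the routine Pascal-triangle recursion.
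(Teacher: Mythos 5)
Your proof is correct and follows essentially the same route as the paper's: induction on $n$, expansion of the product, straightening of the mixed monomials via the commutation identities of Lemma \ref{lem1}, and recombination by Pascal's rule. The only (immaterial) differences are that the paper multiplies by $(a+b)$ on the left rather than on the right and reindexes the sums explicitly instead of collecting coefficients of each monomial, while you spell out more carefully which instance of Lemma \ref{lem1}(III)(i)/(IV)(i) (and its $a\leftrightarrow b$ symmetric form) is being invoked.
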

\begin{proof}
(i)  For $n=1$ the statement holds. Assume that the statement holds for some   integer  $n\geq 1,$ then
\begin{align*}
 (a+b)^{n+1}&=(a+b)(a+b)^{n}=(a+b)\left(\sum_{k=1}^{n} C_{n-1}^{k-1}\left(a^{n-k}b^{k}+b^{n-k}a^{k}\right)\right)\\
&=a\left(\sum_{k=1}^{n} C_{n-1}^{k-1}\left(a^{n-k}b^{k}+b^{n-k}a^{k}\right)\right)+b\left(\sum_{k=1}^{n} C_{n-1}^{k-1}\left(a^{n-k}b^{k}+b^{n-k}a^{k}\right)\right)\\
&=\sum_{k=1}^{n} C_{n-1}^{k-1}\left(a^{n+1-k}b^{k}+ab^{n-k}a^{k}\right)+\sum_{k=1}^{n} C_{n-1}^{k-1}\left(ba^{n-k}b^{k}+b^{n+1-k}a^{k}\right)\\
&=\sum_{k=1}^{n} C_{n-1}^{k-1}\left(a^{n+1-k}b^{k}+b^{n+1-k}a^{k}\right)+\sum_{k=1}^{n} C_{n-1}^{k-1}\left(ab^{n-k}a^{k}+ba^{n-k}b^{k}\right)\\
&=\sum_{k=1}^{n} C_{n-1}^{k-1}\left(a^{n+1-k}b^{k}+b^{n+1-k}a^{k}\right)+\sum_{k=1}^{n} C_{n-1}^{k-1}\left(b^{n-k}a^{k+1}+a^{n-k}b^{k+1}\right)  \quad (\text{see Lemma \ref{lem1}})\\
&=\sum_{k=1}^{n} C_{n-1}^{k-1}\left(a^{n+1-k}b^{k}+b^{n+1-k}a^{k}\right)+\sum_{k=2}^{n+1} C_{n-1}^{k-2}\left(b^{n+1-k}a^{k}+a^{n+1-k}b^{k}\right)\\
&=\sum_{k=2}^{n} \left( C_{n-1}^{k-2}+ C_{n-1}^{k-1}  \right)\left(a^{n+1-k}b^{k}+b^{n+1-k}a^{k}\right)+\left(a^{n}b+b^{n}a\right) +\left(a^{n+1}+b^{n+1}\right)\\
&=\sum_{k=1}^{n+1} C_{n}^{ k-1}\left(a^{n+1-k}b^{k}+b^{n+1-k}a^{k}\right).
\end{align*}
So the statement holds for $n+1.$ Consequently, $\displaystyle(a+b)^{n}=\sum_{k=1}^{n} C_{n-1}^{k-1}\left(a^{n-k}b^{k}+b^{n-k}a^{k}\right)$ for every  integer $n>0.$\\
(ii) The second statement can be proved similarly.
\end{proof}
\par Note that the second point of the previous theorem  was firstly  proved for complex matrices in \cite[Lemma 2.3]{liu-wu-yu}. This result has been extended  by  Huihui Zhu et al. and  Honglin Zou et al. to a Banach algebra, see \cite[Lemma 2.3]{zhu-chen-patricio} and \cite[Lemma 2.9]{mosic-zou-chen}.
\medskip
\par The next  corollary gives a generalization to   the   known Binomial theorem.
\begin{cor}\label{cornewton}  Let $\A$ be a ring. If  $a,b\in \A$ such that $b \in \mbox{comm}_{w}(a),$ then for every positive  integer   $n\neq  2,$ the  following statements hold:\\
(i)  $(a+b)^{n}=\displaystyle\sum_{k=0}^{n} C_{n}^{k} a^{k}b^{n-k}=\sum_{k=0}^{n} C_{n}^{k} b^{k}a^{n-k}.$\\
(ii) $a^{n}- b^{n}=(a-b)\displaystyle\sum_{k=0}^{n-1}a^{k}b^{n-k-1}=\left(\sum_{k=0}^{n-1}a^{k}b^{n-k-1}\right)(a-b)=(a-b)\sum_{k=0}^{n-1}b^{n-k-1}a^{k}=\left(\sum_{k=0}^{n-1}b^{n-k-1}a^{k}\right) (a-b).$
\end{cor}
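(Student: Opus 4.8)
The plan is to reduce both identities to the formulas already established in Theorem \ref{thmp} and Lemma \ref{lem1}, the sole purpose of the hypothesis $n\neq 2$ being to guarantee that the only pair of monomials which may fail to commute, namely $ab$ and $ba$, never actually appears. The driving observation is the following consequence of $b\in\mbox{comm}_{w}(a)$: by Remark \ref{rema1}(iii) one has $a^{p}b^{q}=b^{q}a^{p}$ for all integers $p,q\geq 1$ with $pq\geq 2$ (and trivially when $p=0$ or $q=0$), so that among all monomials of total degree $n$ the unique possibly non-commuting pair is $ab$ versus $ba$, which has total degree exactly $2$.

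For (i), I would start from Theorem \ref{thmp}(i), available since $b\in\mbox{comm}_{r}(a)$, which gives $(a+b)^{n}=\sum_{k=1}^{n}C_{n-1}^{k-1}\left(a^{n-k}b^{k}+b^{n-k}a^{k}\right)$. For $n\neq 2$ every factor $b^{n-k}a^{k}$ occurring here has $k(n-k)\geq 2$ unless one exponent is $0$, so Remark \ref{rema1}(iii) lets me replace it by $a^{k}b^{n-k}$; the excluded pattern $k=n-k=1$ is precisely $n=2$. After this replacement I reindex the first sum by $j=n-k$ and collect the coefficient of each monomial $a^{m}b^{n-m}$, which becomes $C_{n-1}^{m}+C_{n-1}^{m-1}=C_{n}^{m}$ by Pascal's rule; this yields $(a+b)^{n}=\sum_{m=0}^{n}C_{n}^{m}a^{m}b^{n-m}$. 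The second equality then follows by commuting each term, $a^{m}b^{n-m}=b^{n-m}a^{m}$ (again legitimate for $n\neq 2$), followed by the substitution $m\mapsto n-m$.

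For (ii), I would invoke Lemma \ref{lem1}(III)(ii) and (IV)(ii), both applicable because $b\in\mbox{comm}_{w}(a)$ entails the hypotheses of statements (III) and (IV). Each of the four identities there carries a correction term of the form $ba^{n-1}-a^{n-1}b$, $b^{n-1}a-ab^{n-1}$, $ab^{n-1}-b^{n-1}a$ or $a^{n-1}b-ba^{n-1}$; for $n\geq 3$ one has $(n-1)\cdot 1\geq 2$, so Remark \ref{rema1}(iii) makes $a^{n-1}$ commute with $b$ and $b^{n-1}$ commute with $a$, whence every such correction term vanishes. What remains are exactly the four claimed factorisations of $a^{n}-b^{n}$, after the harmless reindexing $k\mapsto n-1-k$ inside the sums. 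The edge case $n=1$, which is not covered by Lemma \ref{lem1}(ii), is checked directly since both sides reduce to $a-b$.

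The main obstacle is really just the bookkeeping around the excluded value $n=2$: I must verify that the pair $ab,ba$ is the unique obstruction to commutativity, that it is forced to appear only at total degree $2$, and that the boundary case $n=1$ (outside the scope of Lemma \ref{lem1}(ii)) is handled separately. Once these points are in place, the rest is reindexing and Pascal's identity.
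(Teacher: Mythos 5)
Your argument is correct and is essentially the paper's own proof: part (i) is the same manipulation of the expansion from Theorem \ref{thmp} (commute the powers $a^{p}b^{q}$ with $pq\geq 2$ via Remark \ref{rema1}(iii), reindex one of the two sums, apply Pascal's rule), and part (ii) fleshes out exactly the deduction from Lemma \ref{lem1}(III)(ii) and (IV)(ii) together with Remark \ref{rema1} that the paper compresses into ``follows directly from Lemma \ref{lem1} and Remark \ref{rema1}.'' Your explicit identification of $n=2$ as the only degree where the correction terms such as $ba^{n-1}-a^{n-1}b$ need not vanish, and your separate check of $n=1$, are just the details the paper leaves implicit.
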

\begin{proof} (i) The cases $n=0$ and $n=1$ are trivial. For  $n\geq 3,$  as  $b \in \mbox{comm}_{w}(a)$ then  Theorem  \ref{thmp} and Remark \ref{rema1}   imply  that
\begin{align*}
(a+b)^{n}&=\sum_{k=1}^{n} C_{n-1}^{k-1}\left(a^{n-k}b^{k}+b^{n-k}a^{k}\right)\\
&=\sum_{k=1}^{n} C_{n-1}^{k-1}a^{n-k}b^{k}+\sum_{k=1}^{n} C_{n-1}^{k-1}b^{n-k}a^{k}\\
&=\sum_{k=1}^{n} C_{n-1}^{k-1}a^{n-k}b^{k}+\sum_{k=0}^{n-1} C_{n-1}^{n-k-1}b^{k}a^{n-k}\\
&=\sum_{k=1}^{n-1} (C_{n-1}^{k-1}+C_{n-1}^{n-k-1})a^{n-k}b^{k}+a^{n}+b^{n}\\
&=\sum_{k=0}^{n} C_{n}^{k} a^{k}b^{n-k}=\sum_{k=0}^{n} C_{n}^{k} b^{k}a^{n-k}
\end{align*}
(ii)  Follows directly from  Lemma \ref{lem1} and  Remark \ref{rema1}.
\end{proof}
Let $\A$ be a Banach algebra with unit $e.$   It is well known that      $\mbox{exp}(a+b)=\mbox{exp}(a)\mbox{exp}(b)$ for every $a\in \mbox{comm}(b).$ But this identity  can fail for noncommuting  $a$ and $b.$ The next corollary shows that  if  $b \in \mbox{comm}_{w}(a),$ then this identity remains true if and only if $a \in \mbox{comm}(b).$
\begin{cor}   Let $\A$ be a Banach algebra with unit $e$  and let  $a,b\in \A$ such that $b \in \mbox{comm}_{w}(a).$ Then $\displaystyle \mbox{exp}(a)\mbox{exp}(b)-\mbox{exp}(a+b)=\frac{ab-ba}{2}.$ In particular, $\displaystyle \mbox{exp}(a)\mbox{exp}(b)-\mbox{exp}(b)\mbox{exp}(a)=ab-ba.$
\end{cor}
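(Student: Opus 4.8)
The plan is to compare the two entire functions term by term, grouping by total degree, and to exploit the fact that Corollary \ref{cornewton}(i) is valid at every positive integer $n\neq 2$. Writing the exponentials as their defining power series, I would first form the Cauchy product of the absolutely convergent series $\mbox{exp}(a)=\sum_{i\geq 0}a^{i}/i!$ and $\mbox{exp}(b)=\sum_{j\geq 0}b^{j}/j!$ and reorganise it by collecting the terms of total degree $n=i+j$, obtaining $\mbox{exp}(a)\mbox{exp}(b)=\sum_{n\geq 0}\frac{1}{n!}\sum_{k=0}^{n}C_{n}^{k}a^{k}b^{n-k}$. Against this I would place $\mbox{exp}(a+b)=\sum_{n\geq 0}(a+b)^{n}/n!$, so that in the difference the coefficient of $1/n!$ is exactly $\sum_{k=0}^{n}C_{n}^{k}a^{k}b^{n-k}-(a+b)^{n}$.

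The decisive observation is that, by Corollary \ref{cornewton}(i), this coefficient vanishes for every positive integer $n\neq 2$ (and it is trivially $0$ for $n=0,1$), so the whole difference collapses to its single degree-$2$ contribution. I would then record the always-valid expansion $(a+b)^{2}=a^{2}+ab+ba+b^{2}$ — the degree $n=2$ being precisely the one excluded in Corollary \ref{cornewton}(i) — and compare it with $\sum_{k=0}^{2}C_{2}^{k}a^{k}b^{2-k}=a^{2}+2ab+b^{2}$. Their difference is $ab-ba$, and dividing by $2!$ yields $\mbox{exp}(a)\mbox{exp}(b)-\mbox{exp}(a+b)=\frac{ab-ba}{2}$, as claimed.

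For the ``in particular'' statement I would run the identical bookkeeping with the second representation furnished by Corollary \ref{cornewton}(i), namely $(a+b)^{n}=\sum_{k=0}^{n}C_{n}^{k}b^{k}a^{n-k}$ for $n\neq 2$, paired with $\mbox{exp}(b)\mbox{exp}(a)=\sum_{n\geq 0}\frac{1}{n!}\sum_{k=0}^{n}C_{n}^{k}b^{k}a^{n-k}$. The surviving degree-$2$ term now gives $\mbox{exp}(b)\mbox{exp}(a)-\mbox{exp}(a+b)=\frac{ba-ab}{2}$, and subtracting the two displayed identities cancels $\mbox{exp}(a+b)$ and leaves $\mbox{exp}(a)\mbox{exp}(b)-\mbox{exp}(b)\mbox{exp}(a)=ab-ba$.

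The only genuinely delicate point is the interchange of summation used to pass from the double series to the degree-graded single series; once absolute convergence in norm, $\sum_{i,j}\|a\|^{i}\|b\|^{j}/(i!\,j!)<\infty$, is invoked this regrouping is routine. After that, everything reduces to the elementary bookkeeping isolating the single nonvanishing degree-$2$ term, so I expect no estimate or limiting argument beyond this and a short proof overall.
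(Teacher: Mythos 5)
Your proof is correct and follows essentially the same route as the paper: form the Cauchy product, regroup by total degree, invoke the generalized binomial formula (Corollary \ref{cornewton}(i)) for every $n\neq 2$, and observe that only the degree-$2$ term survives, contributing $\frac{1}{2!}\bigl(a^{2}+2ab+b^{2}-(a+b)^{2}\bigr)=\frac{ab-ba}{2}$. The ``in particular'' part, which the paper leaves implicit, you handle correctly by the symmetric computation for $\mbox{exp}(b)\mbox{exp}(a)$ and subtraction.
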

\begin{proof} The   Cauchy product   implies  that
\begin{align*}
\mbox{exp}(a)\mbox{exp}(b)&=\left(\sum_{n=0}^{\infty}\frac{a^{n}}{n!}\right)\left(\sum_{n=0}^{\infty}\frac{b^{n}}{n!}\right)\\
&=\sum_{n=0}^{\infty}\left(\sum_{k=0}^n\frac{a^{k}}{k!}\frac{b^{n-k}}{(n-k)!}\right)\\
&=e+(a+b)+\frac{a^{2}+2ab+b^{2}}{2}+\sum_{n=3}^{\infty}\left(\sum_{k=0}^n\frac{a^{k}}{k!}\frac{b^{n-k}}{(n-k)!}\right)\\
&=e+(a+b)+\frac{a^{2}+2ab+b^{2}}{2}+\sum_{n=3}^{\infty}\frac{(a+b)^{n}}{n!}\\
&=\sum_{n=0}^{\infty}\frac{(a+b)^{n}}{n!}+\frac{ab-ba}{2}\\
&=\mbox{exp}(a+b)+\frac{ab-ba}{2}.
\qedhere
\end{align*}
\end{proof}
Recall that an element   $x$  of a ring   $\A$ is called  nilpotent   if    $x^{n}=0$  for some positive integer $n.$  If so then the integer $d(x)=\mbox{min}\{n\in \mathbb{N}: x^{n}=0\}$ is called the degree of $x.$ And   the nilradical $\mbox{Nil}(\A)$ $\A$ is the set consisting of all  nilpotent elements of $\A$, that is, $\mbox{Nil}(\A):=\{a\in \A \text{ }|\text{ } a \text{ is nilpotent}\}.$
\begin{lem}\label{lemnilp}  Let $\A$ be a ring   and  let     $a,b \in \A.$ The following assertions hold:\\
(i) If    $b\in\mbox{Nil}(\A)$   and    $ab \in \mbox{comm}(a)\cup \mbox{comm}(b)$  or   $ba \in \mbox{comm}(a)\cup \mbox{comm}(b),$  then   $ab$ and $ba$ both belong to $\mbox{Nil}(\A).$ Furthermore, in the first case  we have  $d(ab)\leq d(b)$ and $d(ba)\leq d(b)+1,$ and in the second case we have $d(ba)\leq d(b)$ and $d(ab)\leq d(b)+1.$\\
(ii) If  $a,b\in\mbox{Nil}(\A)$  and   $b \in \mbox{comm}_{l}(a)\cup\mbox{comm}_{r}(a),$  then $a+b\in\mbox{Nil}(\A)$ and   $$\mbox{max}\{d(a),d(b)\}-\mbox{min}\{d(a),d(b)\}\leq d(a+b)\leq d(a)+d(b).$$
(iii)   If $b \in \mbox{comm}_{l}(a)\cap\mbox{comm}(a^{n})$ for some integer $n>0,$ then     $a^{m}- b^{m}= (a^{m-1}+ ba^{m-2}+ b^2a^{m-3}+ \dots+ b^{m-2}a+ b^{m-1})(a- b)$ for all   integer  $m>n.$ Analogously, if $a \in \mbox{comm}_{r}(b)\cap \mbox{comm}(b^{n})$ for some integer $n>0,$ then     $a^{m}- b^{m}= (a- b)(a^{m-1}+ ba^{m-2}+ b^2a^{m-3}+ \dots+ b^{m-2}a+ b^{m-1})$ for all  integer  $m>n.$
\end{lem}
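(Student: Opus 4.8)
The plan is to handle the three assertions in turn, each time reducing to an identity already recorded in Lemma \ref{lem1}, Theorem \ref{thmp} or Remark \ref{rema1}. For \emph{(i)}, put $p=d(b)$, so $b^{p}=0$. If $ab\in\mbox{comm}(a)$, Lemma \ref{lem1}(I)(ii) gives $(ab)^{n}=a^{n}b^{n}$ and $(ba)^{n}=ba^{n}b^{n-1}$; taking $n=p$ and $n=p+1$ yields $(ab)^{p}=a^{p}b^{p}=0$ and $(ba)^{p+1}=ba^{p+1}b^{p}=0$, i.e. $d(ab)\le p$ and $d(ba)\le p+1$. If $ab\in\mbox{comm}(b)$, Lemma \ref{lem1}(II)(ii) gives $(ba)^{n}=a^{n-1}b^{n}a$, so $(ab)^{p}=0$ and $(ba)^{p}=0$. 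Thus the first case gives $d(ab)\le d(b)$ and $d(ba)\le d(b)+1$. The second case, $ba\in\mbox{comm}(a)\cup\mbox{comm}(b)$, follows by the same computation carried out in the opposite ring $\A^{op}$, which interchanges $ab$ and $ba$ and converts the power formulas into $(ba)^{n}=b^{n}a^{n}$ together with $(ab)^{n}=b^{n-1}a^{n}b$ (resp. $(ab)^{n}=ab^{n}a^{n-1}$); this produces $d(ba)\le d(b)$ and $d(ab)\le d(b)+1$.

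For the upper bound in \emph{(ii)}, write $r=d(a)$, $s=d(b)$, $N=r+s$, and assume $b\in\mbox{comm}_{r}(a)$ (the case $b\in\mbox{comm}_{l}(a)$ is symmetric via Theorem \ref{thmp}(ii)). Theorem \ref{thmp}(i) expands $(a+b)^{N}$ into terms $a^{N-k}b^{k}$ and $b^{N-k}a^{k}$ with $1\le k\le N$. A term $a^{N-k}b^{k}$ can be nonzero only if $N-k<r$ and $k<s$, which forces $s<k<s$, impossible; the same exponent count kills every $b^{N-k}a^{k}$. Hence $(a+b)^{N}=0$, so $a+b\in\mbox{Nil}(\A)$ and $d(a+b)\le d(a)+d(b)$.

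The lower bound is the delicate point, obtained by feeding the upper bound back into itself. Set $c=a+b$. When $b\in\mbox{comm}_{l}(a)$, Lemma \ref{lem1}(III)(iii) gives $ca\in\mbox{comm}(c)$ and $bc\in\mbox{comm}(b)$; combining these with $cb=c^{2}-ca$, $ac=a^{2}+ab$ and the hypotheses $ab\in\mbox{comm}(a)$, $ba\in\mbox{comm}(b)$ shows $a,b\in\mbox{comm}_{l}(c)$, hence $-a,-b\in\mbox{comm}_{l}(c)$ since these relations are insensitive to sign (the case $b\in\mbox{comm}_{r}(a)$ is analogous via Lemma \ref{lem1}(IV)(iii)). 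Applying the already-proved upper bound to the nilpotent pairs $(c,-b)$ and $(c,-a)$, whose sums are $a$ and $b$, gives $d(a)\le d(a+b)+d(b)$ and $d(b)\le d(a+b)+d(a)$, that is $\max\{d(a),d(b)\}-\min\{d(a),d(b)\}\le d(a+b)$. There is no circularity: the upper bound is a proven general statement, here applied to specific elements.

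For \emph{(iii)}, rewrite the factorization of Lemma \ref{lem1}(III)(ii) as $a^{m}-b^{m}=(a^{m-1}+ba^{m-2}+\dots+b^{m-1})(a-b)+(a^{m-1}b-ba^{m-1})$, so the only thing to verify is that the correction term $a^{m-1}b-ba^{m-1}$ vanishes, i.e. that $b\in\mbox{comm}(a^{m-1})$ for $m-1\ge n$. Starting from $b\in\mbox{comm}(a^{n})$ and using $aba=a^{2}b$ (from $ab\in\mbox{comm}(a)$), the induction $ba^{j+1}=(a^{j}b)a=a^{j}(ba)=a^{j-1}(aba)=a^{j+1}b$ extends commutation to all $j\ge n$, which is exactly what is needed. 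The second formula is symmetric: the correction term is then $b^{m-1}a-ab^{m-1}$, and from $a\in\mbox{comm}(b^{n})$ with $bab=ab^{2}$ (from $ab\in\mbox{comm}(b)$) the induction $b^{j+1}a=b(ab^{j})=(bab)b^{j-1}=(ab^{2})b^{j-1}=ab^{j+1}$ gives $a\in\mbox{comm}(b^{m-1})$ for $m-1\ge n$. I expect the main obstacle to be precisely the lower bound of \emph{(ii)}: unlike the commutative case one cannot simply subtract, and the correct commutation relations between $a+b$ and the summands must first be extracted from Lemma \ref{lem1} before the upper bound can be reused.
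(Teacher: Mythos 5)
Your proof is correct and follows essentially the same route as the paper: the power formulas of Lemma \ref{lem1} for (i), the expansion of Theorem \ref{thmp} with a degree count for the upper bound in (ii) followed by transferring the commutation hypotheses to $a+b$ so the upper bound can be applied to the pairs $(a+b,-b)$ and $(a+b,-a)$, and cancellation of the correction terms $a^{m-1}b-ba^{m-1}$ (resp. $b^{m-1}a-ab^{m-1}$) in Lemma \ref{lem1}(III)(ii)/(IV)(ii) for (iii). The only cosmetic differences are your opposite-ring shortcut for the second case of (i), where the paper recomputes directly, and your re-derivation of $b\in\mbox{comm}_{l}(a+b)$ from Lemma \ref{lem1}(III)(iii), where the paper simply cites Remark \ref{rema1}(iv).
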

\begin{proof} (i) Let $n>0$ be an integer such that $b^{n}=0.$ If  $ab \in \mbox{comm}(a),$ then  Lemma \ref{lem1} implies that  $(ab)^{n}=a^{n}b^{n}=0$ and $(ba)^{n+1}=ba^{n+1}b^{n}=0.$ And if  $ba \in \mbox{comm}(a),$       we obtain again by  Lemma  \ref{lem1}  that $(ba)^{n}=b^{n}a^{n}=0$ and $(ab)^{n+1}=b^{n}a^{n}b=0.$ The other cases go similarly.\\
(ii) Let $n,m\geq 1$   two integers such that $a^{n}=0$ and  $b^{m}=0.$ If  $b \in \mbox{comm}_{l}(a),$ from Theorem \ref{thmp} we get   $(a+b)^{n+m}=\displaystyle\sum_{k=1}^{n+m} C_{n+m-1}^{k-1}\left(a^{k}b^{n+m-k}+b^{k}a^{n+m-k}\right).$ Thus  if $k\geq n,$ then   $a^{k}=0$ and if $k\leq n,$ then $n+m-k\geq m$ and so $b^{n+m-k}=0.$   If $k\geq m,$ then    $b^{k}=0$ and if $k\leq m,$ then $n+m-k\geq n,$  so $a^{n+m-k}=0.$  Hence $(a+b)^{n+m}=0$ and consequently  $d(a+b)\leq d(a)+d(b).$  On the other hand,  we have from Remark \ref{rema1}  that   $(a+b)\in\mbox{comm}_{l}(b).$ Hence  $\mbox{max}\{d(a),d(b)\}-\mbox{min}\{d(a),d(b)\}\leq d(a+b).$ The proof of  the case  $b \in \mbox{comm}_{r}(a)$ goes similarly. \\
(iii) Is an immediate consequence of Lemma \ref{lem1}.
\end{proof}



\medskip
Let $a$ be an element of a Banach algebra $\A$ with unit $e.$  The spectral radius $r(a)$  of $a$  can be expressed by the formula   $r(a)=\mbox{inf}\{M>0 : \left((\frac{a}{M})^{n}\right)_{n} \text{ is bounded}\}.$ 
\begin{prop}\label{propradspec} Let  $\A$ be  a complex Banach algebra  with unit $e$ and let $a,b\in \A.$   The following assertions hold:\\
(i) If $ab \in \mbox{comm}(a)\cup\mbox{comm}(b),$ then $r(ab)\leq r(a)r(b).$\\
(ii) If $a\in\mbox{comm}_{r}(b)\cup\mbox{comm}_{l}(b),$ then $r(a+b) \leq r(a)+r(b).$
\end{prop}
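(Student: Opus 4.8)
The plan is to reduce both inequalities to Gelfand's formula $r(x)=\lim_{n\to\infty}\|x^{n}\|^{1/n}$, feeding in the explicit power expansions already proved: Lemma \ref{lem1} for the product and Theorem \ref{thmp} for the sum.

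For (i) the key observation is that the mixed powers factor under either hypothesis. If $ab\in\mbox{comm}(a)$ I invoke Lemma \ref{lem1}(I)(ii), and if $ab\in\mbox{comm}(b)$ I invoke Lemma \ref{lem1}(II)(ii); both give $(ab)^{n}=a^{n}b^{n}$ for every $n\geq 1$. Submultiplicativity of the norm then yields $\|(ab)^{n}\|^{1/n}\leq\|a^{n}\|^{1/n}\|b^{n}\|^{1/n}$, and since each factor on the right converges (to $r(a)$ and $r(b)$ respectively) I let $n\to\infty$ to obtain $r(ab)\leq r(a)r(b)$.

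For (ii) I would first record that $\mbox{comm}_{r}$ and $\mbox{comm}_{l}$ are symmetric in their two arguments: unwinding the definitions, $a\in\mbox{comm}_{r}(b)$ means exactly $ab\in\mbox{comm}(b)$ and $ba\in\mbox{comm}(a)$, which is precisely $b\in\mbox{comm}_{r}(a)$, and likewise $a\in\mbox{comm}_{l}(b)\iff b\in\mbox{comm}_{l}(a)$. This lets me apply Theorem \ref{thmp}: when $a\in\mbox{comm}_{r}(b)$ I use part (i), $(a+b)^{n}=\sum_{k=1}^{n}C_{n-1}^{k-1}(a^{n-k}b^{k}+b^{n-k}a^{k})$, and when $a\in\mbox{comm}_{l}(b)$ I use part (ii); the two cases are symmetric, so I treat only the first. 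Fixing $\varepsilon>0$, Gelfand's formula supplies a constant $C>0$ with $\|a^{j}\|\leq C\alpha^{j}$ and $\|b^{j}\|\leq C\beta^{j}$ for all $j\geq0$, where $\alpha=r(a)+\varepsilon$ and $\beta=r(b)+\varepsilon$. The triangle inequality and submultiplicativity then give
\[
\|(a+b)^{n}\|\leq C^{2}\sum_{k=1}^{n}C_{n-1}^{k-1}\big(\alpha^{n-k}\beta^{k}+\beta^{n-k}\alpha^{k}\big),
\]
and the crux is that, after a shift of summation index, the ordinary binomial theorem collapses this weighted sum to $\beta(\alpha+\beta)^{n-1}+\alpha(\alpha+\beta)^{n-1}=(\alpha+\beta)^{n}$. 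Hence $\|(a+b)^{n}\|^{1/n}\leq C^{2/n}(\alpha+\beta)$.

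Letting $n\to\infty$ kills the factor $C^{2/n}\to1$ and yields $r(a+b)\leq\alpha+\beta=r(a)+r(b)+2\varepsilon$; since $\varepsilon>0$ was arbitrary, the inequality $r(a+b)\leq r(a)+r(b)$ follows. The step I expect to carry the weight is this passage to the limit in (ii): one must first promote the pointwise bounds $\|a^{j}\|\leq C\alpha^{j}$ (valid for all $j$, absorbing the finitely many small indices into $C$) into a clean uniform estimate on $\|(a+b)^{n}\|$, then recognise the binomial collapse of the weighted sum, and only afterwards extract the $n$th root and pass to the limit, using the arbitrariness of $\varepsilon$ to discard both $C^{2/n}$ and the slack $2\varepsilon$. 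By contrast part (i) is essentially immediate once the factorisation $(ab)^{n}=a^{n}b^{n}$ is available.
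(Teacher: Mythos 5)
Your proof is correct and follows essentially the same route as the paper: both parts reduce to the explicit power expansions $(ab)^{n}=a^{n}b^{n}$ from Lemma \ref{lem1} and the binomial-type formula of Theorem \ref{thmp}, and then estimate the growth of $\|(a+b)^{n}\|$. The only cosmetic difference is that you invoke Gelfand's formula with an $\varepsilon$-argument and the binomial collapse of the weighted sum, whereas the paper uses its stated characterization $r(x)=\inf\{M>0:\ ((x/M)^{n})_{n}\ \text{bounded}\}$ to reach the same bounds.
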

\begin{proof}  Let   $M > r(a),$  $N > r(b).$
(i)  From       Lemma \ref{lem1} we have  $(ab)^{n}=a^{n}b^{n}.$ Since the product of two bounded sequences is bounded, then  the sequence $\{(\frac{a^{p}b^{q}}{M^{p}N^{q}})\}_{p,q} $  is  bounded.   In particular,  $\{(\frac{ab}{MN})^{n}\}_{n}$ is  bounded  and hence $r(ab) \leq r(a)r(b).$ \\
(ii) Assume that $a\in\mbox{comm}_{r}(b)$ (the other case goes similarly). From  Theorem \ref{thmp}, we have
$$\left(\frac{a+b}{M+N}\right)^{n}=\displaystyle\sum_{k=1}^{n} C_{n-1}^{k-1}\left(\frac{M^{n-k}N^{k}}{(M+N)^{n}}\frac{a^{n-k}b^{k}}{M^{n-k}N^{k}}+\frac{N^{n-k}M^{k}}{(M+N)^{n}}\frac{b^{n-k}a^{k}}{N^{n-k}M^{k}}\right).$$ Hence $\{ (\frac{a+b}{M+N})^{n}\}_{n}$
is bounded and thus  $r(a+b) \leq r(a)+r(b).$  
\end{proof}
\begin{cor}\label{corquasi}
Let  $\A$ be  a complex Banach algebra  with unit $e$ and let $a,b\in \A.$  The following assertions hold:\\
(i) If   $a$ or $b$ is quasi-nilpotent and  $ab \in \mbox{comm}(a)\cup\mbox{comm}(b),$   then  $ab$  is  quasi-nilpotent.\\
(ii) If  $a$ and  $b$ are quasi-nilpotent and   $a\in\mbox{comm}_{r}(b)\cup\mbox{comm}_{l}(b),$ then   $a+b$ is quasi-nilpotent.
\end{cor}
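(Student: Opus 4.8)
The plan is to read off both assertions directly from Proposition \ref{propradspec}, since quasi-nilpotence of an element $x$ is by definition exactly the statement $r(x)=0$. The only additional fact I need is that the spectral radius is always nonnegative, which is immediate from its description $r(x)=\mbox{inf}\{M>0 : ((x/M)^{n})_{n} \text{ is bounded}\}$ as an infimum of positive reals. So the whole corollary reduces to combining a product/sum bound with a vanishing factor.

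For (i), I would note that the commutation hypothesis $ab\in\mbox{comm}(a)\cup\mbox{comm}(b)$ is precisely the hypothesis of Proposition \ref{propradspec}(i), which therefore yields $0\leq r(ab)\leq r(a)r(b)$. Since by assumption one of $a,b$ is quasi-nilpotent, the product $r(a)r(b)$ has a zero factor and hence vanishes; squeezing between $0$ and $0$ forces $r(ab)=0$, i.e.\ $ab$ is quasi-nilpotent.

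For (ii), the argument follows the same template: the assumption $a\in\mbox{comm}_{r}(b)\cup\mbox{comm}_{l}(b)$ is exactly the hypothesis of Proposition \ref{propradspec}(ii), so $0\leq r(a+b)\leq r(a)+r(b)$. With both $a$ and $b$ quasi-nilpotent we have $r(a)=r(b)=0$, so the upper bound is $0$ and thus $r(a+b)=0$, meaning $a+b$ is quasi-nilpotent.

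There is essentially no genuine obstacle at this stage, because all the analytic content has already been carried out in Proposition \ref{propradspec}: the submultiplicativity $r(ab)\leq r(a)r(b)$ rests on the identity $(ab)^{n}=a^{n}b^{n}$ from Lemma \ref{lem1}, and the subadditivity $r(a+b)\leq r(a)+r(b)$ rests on the Newton-type expansion of Theorem \ref{thmp} together with the boundedness argument given there. The corollary is merely the specialization of those two inequalities to the degenerate case where the relevant spectral radii are zero, so the write-up is a one-line deduction in each part.
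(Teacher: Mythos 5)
Your proof is correct and matches the paper's intent exactly: the paper states this corollary with no proof at all, treating it as an immediate consequence of Proposition \ref{propradspec}, which is precisely the one-line deduction you spell out. Nothing further is needed.
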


\section{   Perturbation results}
Throughout  this   section,   we focus on the stability of  some spectra of   bounded linear operators in the  Banach algebra $\A=L(X).$  We start first with some  preliminaries results.
\begin{prop}\label{propn.1}
Let $ S, T \in L(X).$ The following statements hold:\\
(i) $TS \in \mbox{comm}(T)$ if and only if $S^{*}T^{*} \in \mbox{comm}(T^{*}).$\\
(ii)  $TS \in \mbox{comm}(T)$  if and only if  $\R(ST-TS)\subset \mathcal{N}(T),$ and $ST \in \mbox{comm}(T)$ if and only if $\R(T)\subset \mathcal{N}(ST-TS).$
\end{prop}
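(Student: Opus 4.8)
The plan is to reduce each assertion to the bare defining commutation identity and then dispatch the two directions by elementary manipulation of products, using the adjoint calculus for part (i).

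For (i), I would start from the observation that $TS\in\mbox{comm}(T)$ is just the operator identity $TST=T^{2}S$ (obtained by writing out $(TS)T=T(TS)$). Taking adjoints and using that $A\mapsto A^{*}$ is an injective isometric anti-homomorphism on $L(X)$, so that $(AB)^{*}=B^{*}A^{*}$ and $A=B$ exactly when $A^{*}=B^{*}$, the identity $TST=T^{2}S$ is equivalent to $(TST)^{*}=(T^{2}S)^{*}$, that is $T^{*}S^{*}T^{*}=S^{*}(T^{*})^{2}$. Reading this last equation as $(S^{*}T^{*})T^{*}=T^{*}(S^{*}T^{*})$ shows it is precisely the statement $S^{*}T^{*}\in\mbox{comm}(T^{*})$; since every step is an equivalence, the asserted ``if and only if'' follows.

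For the first equivalence in (ii), I would expand $T(ST-TS)=TST-T^{2}S$. Hence $T(ST-TS)=0$ if and only if $TST=T^{2}S$, i.e.\ if and only if $TS\in\mbox{comm}(T)$. On the other hand $T(ST-TS)=0$ means $(ST-TS)x\in\mathcal{N}(T)$ for every $x\in X$, which is exactly $\R(ST-TS)\subset\mathcal{N}(T)$. Chaining these two equivalences yields the claim. For the second equivalence I would instead multiply on the right: $(ST-TS)T=ST^{2}-TST$, so that $(ST-TS)T=0$ if and only if $ST^{2}=TST$, i.e.\ $(ST)T=T(ST)$, which is $ST\in\mbox{comm}(T)$. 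Since $(ST-TS)T=0$ says $(ST-TS)(Tx)=0$ for all $x$, equivalently $\R(T)\subset\mathcal{N}(ST-TS)$, the stated equivalence follows.

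There is essentially no hard step here; the only point demanding care is the bookkeeping of the order of composition. In (i) one must invoke the anti-homomorphism property of the adjoint, so that $TS$ turns into $S^{*}T^{*}$ rather than $T^{*}S^{*}$; and in (ii) one must keep track of whether $ST-TS$ is multiplied by $T$ on the left (which produces the kernel inclusion together with the hypothesis $TS\in\mbox{comm}(T)$) or on the right (which produces the range inclusion together with the hypothesis $ST\in\mbox{comm}(T)$). Interchanging the two sides would swap the two statements, so this is the place where an error could creep in.
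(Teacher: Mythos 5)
Your proof is correct, and it is exactly the elementary verification the paper has in mind: the paper's own proof of Proposition \ref{propn.1} consists of the single word ``Obvious,'' and your argument (writing each commutation condition as $TST=T^{2}S$ or $ST^{2}=TST$, passing to adjoints via the anti-homomorphism property for (i), and reading off the factorizations $T(ST-TS)=0$ and $(ST-TS)T=0$ as kernel/range inclusions for (ii)) supplies precisely the details being suppressed. No gaps; the side-of-multiplication bookkeeping you flag is handled correctly.
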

\begin{proof} Obvious.
\end{proof}
\begin{cor}\label{corn.1}  Let $S,T \in L(X).$ The following statements hold:\\
 (i)  If  $T$  is one-to-one, then   $TS \in \mbox{comm}(T)$  if and only if $S\in \mbox{comm}(T).$\\
(ii)  If  $T$  is onto, then   $ST \in \mbox{comm}(T)$  if and only if $S\in \mbox{comm}(T).$\\
(iii) Moreover,  if  $T$ and  $S$ are self-adjoint   Hilbert space operators, then $TS \in \mbox{comm}(T)$ if and only if $ST \in \mbox{comm}(T).$
\end{cor}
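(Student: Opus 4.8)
The plan is to read all three equivalences straight off Proposition \ref{propn.1}, with no independent argument needed: injectivity, surjectivity, and self-adjointness each collapse one of the one-sided conditions in that proposition into genuine commutativity. In every case the backward implication $S \in \mbox{comm}(T) \Rightarrow TS, ST \in \mbox{comm}(T)$ is immediate, precisely because $S \in \mbox{comm}(T)$ means $ST = TS$ as operators, so the products $TS$ and $ST$ coincide and trivially commute with $T$; hence only the forward directions carry any content, and those are exactly where Proposition \ref{propn.1} does the work.

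For (i) I would invoke Proposition \ref{propn.1}(ii): $TS \in \mbox{comm}(T)$ is equivalent to $\R(ST - TS) \subset \mathcal{N}(T)$. When $T$ is one-to-one we have $\mathcal{N}(T) = \{0\}$, so this inclusion forces $\R(ST - TS) = \{0\}$, that is $ST - TS = 0$, and therefore $S \in \mbox{comm}(T)$. Part (ii) is the dual bookkeeping: again by Proposition \ref{propn.1}(ii), $ST \in \mbox{comm}(T)$ is equivalent to $\R(T) \subset \mathcal{N}(ST - TS)$, and when $T$ is onto we have $\R(T) = X$, which forces $\mathcal{N}(ST - TS) = X$, i.e. $ST - TS = 0$, so once more $S \in \mbox{comm}(T)$.

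For (iii) I would instead use Proposition \ref{propn.1}(i), which asserts $TS \in \mbox{comm}(T) \Leftrightarrow S^{*}T^{*} \in \mbox{comm}(T^{*})$. Since $S$ and $T$ are self-adjoint, $S^{*} = S$ and $T^{*} = T$, whence $S^{*}T^{*} = ST$ and $\mbox{comm}(T^{*}) = \mbox{comm}(T)$; the right-hand condition is then literally $ST \in \mbox{comm}(T)$, and the desired equivalence falls out. (Alternatively one can argue directly by taking adjoints in the identity $TST = T^{2}S$, but routing through Proposition \ref{propn.1}(i) is cleaner.)

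I do not expect any real obstacle here: the entire content of the corollary is already carried by Proposition \ref{propn.1}, and all that remains is to observe that the three hypotheses, namely $\mathcal{N}(T) = \{0\}$, $\R(T) = X$, and the self-adjointness $S^{*} = S$, $T^{*} = T$, convert the one-sided inclusions (or the starred condition) into the two-sided identity $ST = TS$. The only point worth stating explicitly is the trivial backward direction noted above, so that each ``if and only if'' is genuinely closed on both sides.
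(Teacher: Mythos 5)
Your proposal is correct and matches the paper's intent exactly: the corollary is stated without proof as an immediate consequence of Proposition \ref{propn.1}, and your reading of $\mathcal{N}(T)=\{0\}$, $\R(T)=X$, and self-adjointness into parts (ii) and (i) of that proposition (equivalently, taking adjoints in $TST=T^{2}S$) is precisely the intended argument. No gaps.
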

\begin{ex} Note that if  an operator $T$  is not onto and $ST \in \mbox{comm}(T)$,   then we cannot guarantee that $S$ commutes with $T$ even if $T$ is one-to-one. For this, consider  the  unilateral  right shift    $R$    and the nilpotent operator $N$  defined  on the Hilbert space $\ell^{2}$ by $Rx=(0, x_{1}, x_{2},\dots),$     $Nx=(0, -x_{1},0,\dots),$ where $x=(x_{n})_{n\geq1}\in\ell^{2}.$   $R$  is   one-to-one and not  onto, and  $NR \in \mbox{comm}(R).$ But $NR\neq RN.$   This   entails   also  from Proposition \ref{propn.1}     that the condition of  the injectivity of  $T$ assumed in the first assertion of Corollary \ref{corn.1}   is crucial.
\end{ex}
\par  Recall that the degree of stable iteration of an operator $T$ is defined  by  $\mbox{dis}(T)=\mbox{inf}\,\Delta(T),$     where    $$\Delta(T)=\{m\in\N \,:\,  \alpha(T_{[m]})= \alpha(T_{[r]}),\,\forall r \in \N  \,  \, r\geq m \}.$$   $T$ is said to be  semi-regular if $\R(T)$ is closed and $\mbox{dis}(T)=0,$ and      $T$ is said to be  essentially semi-regular if $\R(T)$ is closed  and there exists   a finite-dimensional subspace $F$ such that  $ \mathcal{N}(T)\subset \R(T^{\infty})+F.$ For more details about these definitions, one can see \cite{mbekhta,mullerreg}.
\begin{prop}\label{propn.2} Let   $S,T \in L(X)$ such that $S \in \mbox{comm}_{r}(T).$ The following assertions hold:\\
(i) If $\mbox{dis}(TS)=0,$ then $\mbox{dis}(S)=0$ and $\mbox{dis}(T)\leq 1.$\\
(ii) If  $TS$ is semi-regular, then $S$ is semi-regular.\\
(iii) If  $TS$ is essentially semi-regular, then $S$ is essentially  semi-regular.
\end{prop}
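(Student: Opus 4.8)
The plan is to translate everything into statements about the chains $\mathcal{N}(\cdot^{n})$ and $\R(\cdot^{n})$, exploiting the two algebraic identities that $S\in\mbox{comm}_{r}(T)$ forces. From Lemma \ref{lem1}(II) (applied to the pair $T,S$, for which $TS\in\mbox{comm}(S)$) I get $(TS)^{n}=T^{n}S^{n}$ and $TS\in\mbox{comm}(S^{m})$ for every $m$, the latter yielding the shift relation $TS^{n}=S^{n-1}(TS)$; dually, from $ST\in\mbox{comm}(T)$ I get $(ST)^{n}=T^{n-1}S^{n}T$ and $ST\in\mbox{comm}(T^{m})$. Two consequences I would record at once are the kernel inclusion $\mathcal{N}(S^{n})\subseteq\mathcal{N}((TS)^{n})$ and the invariance of each $\R(S^{m})$ and $\R((TS)^{m})$ under both $S$ and $TS$, which the commutation of $TS$ with the powers of $S$ provides. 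I would emphasise early that one cannot expect the cheap inclusion $\R(TS)\subseteq\R(S)$: with $X=\C^{3}$, $S=e_{1}\otimes e_{3}^{*}$ and $Te_{1}=e_{2}$, $Te_{2}=Te_{3}=0$ one checks $S\in\mbox{comm}_{r}(T)$ while $\R(TS)=\mbox{span}\{e_{2}\}\not\subseteq\mbox{span}\{e_{1}\}=\R(S)$, so the argument must be dimensional, not by set inclusion.

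Next I would reduce the three conclusions to conditions on these chains. Using the classical identity $\alpha(A_{[m]})=\dim(\mathcal{N}(A)\cap\R(A^{m}))=\alpha(A^{m+1})-\alpha(A^{m})$, the vanishing $\mbox{dis}(A)=0$ is equivalent to the jump sequence $\alpha(A^{m+1})-\alpha(A^{m})$ being constant (equal to $\alpha(A)$), semi-regularity adds to this the closedness of $\R(A)$, and essential semi-regularity replaces the kernel condition by $\mathcal{N}(A)\subseteq\R(A^{\infty})+F$ with $\dim F<\infty$. I would also keep in reserve the fact that semi-regularity is self-dual, so that for (ii) it suffices to verify either $\mathcal{N}(S)\subseteq\R(S^{\infty})$ or $\mathcal{N}(S^{\infty})\subseteq\R(S)$, alongside $\R(S)$ closed.

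The computational heart is the product rank-nullity relation $\alpha((TS)^{n})=\alpha(S^{n})+\dim\big(\R(S^{n})\cap\mathcal{N}(T^{n})\big)$, obtained from $(TS)^{n}=T^{n}S^{n}$ together with $\mathcal{N}(S^{n})\subseteq\mathcal{N}((TS)^{n})$. Writing $c_{n}=\dim(\R(S^{n})\cap\mathcal{N}(T^{n}))$, this gives for the jumps $\alpha((TS)^{n+1})-\alpha((TS)^{n})=[\alpha(S^{n+1})-\alpha(S^{n})]+(c_{n+1}-c_{n})$. Feeding in $\mbox{dis}(TS)=0$ and the fact that the jumps of $S$ are non-increasing and bounded by $\alpha(S)$ already forces $c_{n+1}-c_{n}\geq c_{1}$; the reverse inequality $c_{n+1}-c_{n}\leq c_{1}$ — which is what pins the jumps of $S$ to the constant value $\alpha(S)$ and hence yields $\mbox{dis}(S)=0$ — is exactly where the full strength of $TS\in\mbox{comm}(S)$ (and not merely the factorisation $(TS)^{n}=T^{n}S^{n}$) must be injected, via the $S$- and $TS$-invariance of the relevant ranges. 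For the bound $\mbox{dis}(T)\leq 1$ I would run the parallel argument off the dual identity $(ST)^{n}=T^{n-1}S^{n}T$ with $ST\in\mbox{comm}(T)$; the asymmetry of this identity, namely the single trailing factor $T$ that cannot be absorbed, shifts the kernel bookkeeping by one step and is precisely what relaxes the conclusion from $0$ to $1$. Statements (ii) and (iii) then follow from (i) once $\R(S)$ is shown to be closed, the essentially semi-regular case being obtained by carrying the finite-dimensional correction $F$ through the same inclusions.

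The step I expect to be the main obstacle is the transfer of closedness from $\R(TS)$ to $\R(S)$. The naive estimate fails: since $\mathcal{N}(S)\subseteq\mathcal{N}(TS)$, the bound $\|Sx\|\geq\|TSx\|/\|T\|$ only controls $Sx$ by $\mbox{dist}(x,\mathcal{N}(TS))$, which is smaller than $\mbox{dist}(x,\mathcal{N}(S))$, so it points the wrong way. I would instead invoke the Kato-type decomposition and the reduced minimum modulus machinery of \cite{mbekhta,mullerreg}: on the closed $S$- and $TS$-invariant subspaces supplied by the (essentially) semi-regular structure of $TS$, one restricts $S$ and shows its reduced minimum modulus stays bounded away from $0$, recovering closedness of $\R(S)$. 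Controlling the defect increments $c_{n+1}-c_{n}$ and this closedness transfer are the two genuinely non-formal points of the proof.
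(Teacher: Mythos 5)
There is a genuine gap: the two steps you yourself label ``the genuinely non-formal points'' --- the reverse inequality $c_{n+1}-c_{n}\leq c_{1}$ that would pin the jumps of $S$, and the transfer of closedness from $\R(TS)$ to $\R(S)$ --- are exactly the steps you do not carry out. You only announce that the hypothesis ``must be injected'' there, or that you ``would invoke'' reduced-minimum-modulus machinery; as written, neither (i) nor (ii)--(iii) is established. A secondary defect is that the jump arithmetic $\alpha((TS)^{n+1})-\alpha((TS)^{n})$ is only meaningful for finite nullities, whereas $\mbox{dis}$ imposes no such finiteness, so even where your route works it would have to be rephrased through quotient spaces.

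The root cause is your early conclusion that ``the argument must be dimensional, not by set inclusion.'' Your counterexample to $\R(TS)\subseteq\R(S)$ is correct, but the useful inclusion sits one power up: from $ST\in\mbox{comm}(T)$ one gets $(TS)^{2}=TSTS=ST^{2}S$, hence $\R((TS)^{n})\subseteq\R(S)$ for every $n\geq 2$ (in your $3\times 3$ example $(TS)^{2}=0$, consistent with this). This single identity is what the paper's proof runs on. For (i): $\mbox{dis}(TS)=0$ gives $\mathcal{N}(S^{m})\subseteq\mathcal{N}((TS)^{m})\subseteq\bigcap_{n}\R((TS)^{nm})\subseteq\R(S)$ for all $m$, and by the standard equivalences of \cite{mbekhta} this already means $\mbox{dis}(S)=0$ --- a pure set-inclusion argument with no defect bookkeeping; the bound $\mbox{dis}(T)\leq 1$ is likewise obtained from inclusions, $\mathcal{N}(T^{n+1})\subseteq\mathcal{N}(ST^{n+1})=\mathcal{N}(TST^{n})\subseteq\mathcal{N}(T^{n})+\R(T)$, not from a shifted jump count. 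For (ii)--(iii) the same identity read as $(TS)^{2}=S(T^{2}S)=(T^{2}S)S$ exhibits $(TS)^{2}$ as a product of the two \emph{commuting} operators $S$ and $T^{2}S$; since $TS$ is (essentially) semi-regular iff $(TS)^{2}$ is, the commutative product theorems of \cite{mbekhta} and \cite{mullerreg} give (essential) semi-regularity of $S$ outright, closedness of $\R(S)$ included, so the closedness transfer you identify as the main obstacle never has to be confronted directly.
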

\begin{proof} (i) As   $S \in \mbox{comm}_{r}(T)$    we then get from Lemma \ref{lem1}  that   $(TS)^{n}=T^{n}S^{n}=ST^2S(TS)^{n-2}$ for all integer $n \geq 2.$  Moreover,    $\mbox{dis}(TS)=0$ implies that  $\mbox{dis}((TS)^{m})=0$ for every  $m\geq 1.$   Hence $\mathcal{N}(S^{m})\subset  \mathcal{N}((TS)^{m}) \subset  \bigcap_{n} \R((TS)^{nm})\subset \R(S)$ for all $m\geq 1.$ Hence    $\mbox{dis}(S)=0.$ Let $n\geq 1.$ As $\mbox{dis}(TS)=0,$ then  $\mathcal{N}(T^{n+1})\subset   \mathcal{N}(ST^{n+1})=  \mathcal{N}(TST^{n})\subset \mathcal{N}(T^{n})+ \R(TS^{n+1}).$ Therefore, $\mathcal{N}(T^{n+1})\subset \mathcal{N}(T^{n})+ \R(T)$ and then $\mbox{dis}(T)\leq 1.$ The points (ii) and (iii) are consequences  of  \cite[Lemme  4.15]{mbekhta},  \cite[Corollary 3.4, Theorem 3.5]{mullerreg} and    the fact that  $S(T^{2}S)=(TS)^{2}=(T^{2}S)S.$
\end{proof}
The next corollary  extends    \cite[Theorem 3.5]{mullerreg} and \cite[Proposition 3.7, Lemme 4.15]{mbekhta}.
\begin{cor} If  $T, S \in L(X)$ such that $S \in \mbox{comm}_{w}(T)$  and $TS$ is semi-regular (resp., essentially semi-regular),  then $ST,$ $T$ and $S$ are also semi-regular (resp., essentially semi-regular).
\end{cor}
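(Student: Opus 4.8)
The plan is to obtain all three conclusions from Proposition \ref{propn.2}, which already disposes of $S$, by exploiting the symmetry of the relation $\mbox{comm}_w$ together with a single algebraic identity that transfers the semi-regularity of $TS$ to $ST$.

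First I would unwind the hypothesis. The membership $S\in\mbox{comm}_w(T)=\mbox{comm}_l(T)\cap\mbox{comm}_r(T)$ encodes the four relations $TS\in\mbox{comm}(T)$, $TS\in\mbox{comm}(S)$, $ST\in\mbox{comm}(T)$ and $ST\in\mbox{comm}(S)$. These are manifestly invariant under interchanging $S$ and $T$, so $T\in\mbox{comm}_w(S)$ as well; in particular $S\in\mbox{comm}_r(T)$ and $T\in\mbox{comm}_r(S)$. Since $TS$ is semi-regular (resp. essentially semi-regular) by hypothesis, Proposition \ref{propn.2}(ii) (resp. (iii)) applied to the pair $(T,S)$ immediately yields that $S$ is semi-regular (resp. essentially semi-regular).

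The heart of the argument is the identity $(ST)^2=(TS)^2$. Using that $ST\in\mbox{comm}(S)$, Lemma \ref{lem1} gives $(ST)^2=S^2T^2$, and using that $TS\in\mbox{comm}(T)$ it gives $(TS)^2=T^2S^2$; since Remark \ref{rema1}(iii) yields $T^2\in\mbox{comm}(S^2)$, the two right-hand sides coincide and $(ST)^2=(TS)^2$. Now I would invoke the standard fact that an operator is semi-regular (resp. essentially semi-regular) if and only if each of its powers is, see \cite{mbekhta,mullerreg}: from $TS$ semi-regular one gets $(TS)^2=(ST)^2$ semi-regular, whence $ST$ is semi-regular, and the essentially semi-regular case is identical. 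Finally, feeding $ST$ semi-regular (resp. essentially semi-regular) together with $T\in\mbox{comm}_r(S)$ into Proposition \ref{propn.2}(ii) (resp. (iii)), now applied to the pair $(S,T)$, shows that $T$ is semi-regular (resp. essentially semi-regular).

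The only genuinely delicate points are the identity $(ST)^2=(TS)^2$ — it is precisely at the exponent $2$ that the two products agree, the commutation data not forcing $(ST)^n=(TS)^n$ for larger $n$ — and the appeal to the power-stability of (essentially) semi-regular operators, which is what lets a statement about $TS$ migrate to $ST$ through their common square. Everything else is a bookkeeping of which of the four commutators is being used and in which slot Proposition \ref{propn.2} is invoked.
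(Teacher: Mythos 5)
Your proof is correct and takes essentially the same route as the paper: the key identity $(TS)^{2}=(ST)^{2}$, the stability of (essential) semi-regularity under powers, and two applications of Proposition \ref{propn.2} using the symmetry of the relation $\mbox{comm}_{w}$ to handle both $S$ and $T$. The only (immaterial) slip is your closing aside: the commutation data do force $(ST)^{n}=(TS)^{n}$ for every $n\geq 2$, since $(ST)^{n}=S^{n}T^{n}$, $(TS)^{n}=T^{n}S^{n}$ by Lemma \ref{lem1} and $T^{n}\in\mbox{comm}(S^{n})$ by Remark \ref{rema1}(iii), so the agreement is not special to the exponent $2$.
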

\begin{proof}  As $S \in \mbox{comm}_{w}(T)$ then $(TS)^{2}=(ST)^{2}.$  Hence  $TS$ is semi-regular (resp., essentially semi-regular)  if and only if $(TS)^{2}$ is semi-regular (resp., essentially semi-regular) if and only if $(ST)^{2}$ is semi-regular (resp., essentially semi-regular)  if and only if $ST$ is semi-regular (resp., essentially semi-regular).    The rest of the proof follows directly from  Proposition \ref{propn.2}.
\end{proof}

\begin{prop}\label{propn.3} Let   $T \in L(X)$   and   $N \in \mbox{Nil}(L(X)).$ The following holds:\\
(i) If  $N\in\mbox{comm}_{r}(T),$ then    $T$ is onto if and only if $T +N$ is onto.\\
(ii) If  $N\in\mbox{comm}_{l}(T),$ then    $T$ is bounded below if and only if $T +N$ is bounded below.
\end{prop}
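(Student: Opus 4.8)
The plan is to derive both equivalences from a single \emph{one-sided} factorisation of a power of $T$ through $T+N$, and then read off the spectral property from which side $T+N$ appears on. For (i) I would produce an identity of the form $T^{m}=(T+N)S_{m}$: surjectivity of $T$ forces $T^{m}$ onto, and having $T+N$ as a \emph{left} factor immediately gives $\R(T+N)=X$. The naive expansions coming from Lemma \ref{lem1}(IV) are not good enough, since they leave a commutator remainder such as $TN^{d-1}-N^{d-1}T$; the device that removes it is Lemma \ref{lemnilp}(iii), whose clean factorisation costs the extra hypothesis ``$T$ commutes with a power of $N$''. The key observation that makes everything work is that this auxiliary hypothesis is \emph{free} here, because a sufficiently high power of $N$ vanishes.

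For the forward half of (i) I would set $a=T$, $b=-N$ and $n=d$ (where $N^{d}=0$). From $N\in\mbox{comm}_{r}(T)$ one checks directly that $T\in\mbox{comm}_{r}(-N)$, and $T\in\mbox{comm}\big((-N)^{d}\big)=\mbox{comm}(0)$ holds trivially. The second statement of Lemma \ref{lemnilp}(iii) then yields, for every $m>d$,
\[
T^{m}-(-N)^{m}=(T+N)\Big(\textstyle\sum_{k=0}^{m-1}(-N)^{k}T^{m-1-k}\Big),
\]
and since $(-N)^{m}=0$ this reads $T^{m}=(T+N)S_{m}$. Hence $T$ onto gives $X=\R(T^{m})\subset\R(T+N)$. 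For the converse I would simply apply the implication just proved to the pair $(T+N,-N)$, which again sums to $T$ and has nilpotent second entry; the only verification is $-N\in\mbox{comm}_{r}(T+N)$, which follows from $(T+N)\in\mbox{comm}_{r}(N)$ (Remark \ref{rema1}(iv)) together with $(T+N)N=TN+N^{2}\in\mbox{comm}(N)$.

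Part (ii) I would obtain from (i) by duality rather than by a direct factorisation. The reason is exactly the asymmetry above: to treat bounded-belowness one would need $T+N$ as a \emph{right} factor, i.e. Lemma \ref{lemnilp}(iii) in its $\mbox{comm}_{l}$ form, which requires $N$ to commute with a power of $T$ — and that is not available. Instead I would pass to adjoints. Using Proposition \ref{propn.1}(i) on each of the two defining relations $TN\in\mbox{comm}(T)$ and $NT\in\mbox{comm}(N)$ shows that $N\in\mbox{comm}_{l}(T)$ is equivalent to $N^{*}T^{*}\in\mbox{comm}(T^{*})$ and $T^{*}N^{*}\in\mbox{comm}(N^{*})$, that is, $N^{*}\in\mbox{comm}_{r}(T^{*})$; moreover $N^{*}$ is nilpotent of the same degree. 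Since $T$ is bounded below iff $T^{*}$ is onto and $(T+N)^{*}=T^{*}+N^{*}$, applying (i) to the pair $(T^{*},N^{*})$ gives $T$ bounded below $\iff T^{*}$ onto $\iff T^{*}+N^{*}$ onto $\iff T+N$ bounded below.

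The main obstacle is conceptual rather than computational: recognising that nilpotency trivialises the commutation-with-a-power hypothesis in Lemma \ref{lemnilp}(iii), and that this clean factorisation is only available on one side. This one-sidedness is precisely what makes (i) direct while forcing (ii) to be routed through the adjoint. Everything else — the verifications that $T\in\mbox{comm}_{r}(-N)$, that $-N\in\mbox{comm}_{r}(T+N)$, and the translation of $\mbox{comm}_{l}$ into $\mbox{comm}_{r}$ under $*$ — is routine bookkeeping with the definitions and the results already established.
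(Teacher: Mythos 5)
Your proof is correct, but it takes a genuinely different route from the paper's, whose entire argument is one line: since $N\in\mbox{comm}_{r}(T)$ contains $NT\in\mbox{comm}(T)$, Corollary \ref{corn.1}(ii) upgrades this to $N\in\mbox{comm}(T)$ as soon as $T$ is onto (and, for the converse, $N(T+N)\in\mbox{comm}(T+N)$ from Remark \ref{rema1}(iv) upgrades to $N\in\mbox{comm}(T+N)$ as soon as $T+N$ is onto); likewise in (ii) the injectivity of $T$, resp.\ of $T+N$, together with $TN\in\mbox{comm}(T)$ upgrades the hypothesis to genuine commutation via Corollary \ref{corn.1}(i). So the paper simply observes that the one-sided hypotheses collapse to full commutativity exactly when they are needed, and quotes the classical commutative nilpotent-perturbation results. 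You never notice this collapse; instead you build the explicit factorisation $T^{m}=(T+N)S_{m}$ — and your remark that the commutator remainder $T(-N)^{m-1}-(-N)^{m-1}T$ vanishes for $m>d(N)$, so the ``commutation with a power of $b$'' hypothesis in Lemma \ref{lemnilp}(iii) is free, is exactly right — and you are then forced to route (ii) through adjoints because the factorisation only places $T+N$ as a left factor. Both proofs are valid. The paper's is shorter and explains \emph{why} the statement is unsurprising (nothing non-commutative survives once the relevant operator is onto or injective); yours is more self-contained, avoids the injectivity/surjectivity upgrade entirely, exhibits the concrete mechanism $X=\R(T^{m})\subset\R(T+N)$, and your diagnosis that the one-sidedness of the factorisation is what forces the detour through $T^{*}$ in part (ii) is a genuine structural insight that the paper's proof conceals.
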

\begin{proof} Under  conditions assummed,  Corollary \ref{corn.1}  implies that  $N\in\mbox{comm}(T).$ And the results are already done. 
\end{proof}
\begin{lem}\label{lemMinv} Let $T,S\in L(X)$  such that $ST \in \mbox{comm}(T)$ and let $\lambda\neq 0.$ Then $M:=\mathcal{N}(T-\lambda I)$ is $S$-invariant and $S_{M}\in\mbox{comm}(T_{M}).$ If in addition $TS \in \mbox{comm}(S),$ then $B:=\mathcal{N}(T+S-\lambda I)$ is $S$-invariant and $S_{B}\in\mbox{comm}(T_{B}).$
\end{lem}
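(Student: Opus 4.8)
The plan is to handle the two assertions in sequence, arranging matters so that the second becomes an application of the first to the operator $T+S$.

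For the first assertion I would start by unwinding the hypothesis $ST \in \mbox{comm}(T)$ into the operator identity $ST^{2}=TST$. Fix $x \in M=\mathcal{N}(T-\lambda I)$, so that $Tx=\lambda x$ and therefore $T^{2}x=\lambda^{2}x$. Evaluating both sides of $ST^{2}=TST$ at $x$ yields $\lambda^{2}Sx=\lambda\,TSx$; since $\lambda\neq 0$ I may cancel one factor of $\lambda$ to obtain $TSx=\lambda Sx$, that is, $Sx\in M$. This gives the $S$-invariance of $M$. The commutation $S_{M}\in\mbox{comm}(T_{M})$ is then immediate, because $M$ is also $T$-invariant (it is an eigenspace, so $T_{M}=\lambda I_{M}$) and a scalar multiple of the identity commutes with every operator, in particular with $S_{M}$.

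For the second assertion the key observation is that the two standing hypotheses $ST\in\mbox{comm}(T)$ and $TS\in\mbox{comm}(S)$ together say precisely $S\in\mbox{comm}_{r}(T)$. By Remark \ref{rema1}(iv) this gives $(T+S)\in\mbox{comm}_{r}(S)$, and in particular $S(T+S)\in\mbox{comm}(T+S)$. I can then invoke the first assertion verbatim, with $T$ replaced by $T+S$ and the same multiplier $S$, to conclude that $B=\mathcal{N}((T+S)-\lambda I)$ is $S$-invariant. (If one prefers not to cite Remark \ref{rema1}, the identity $S(T+S)\in\mbox{comm}(T+S)$ can be verified directly: expanding $(ST+S^{2})(T+S)$ and $(T+S)(ST+S^{2})$ and cancelling the common terms $S^{2}T+S^{3}$ reduces the claim to $ST^{2}+STS=TST+TS^{2}$, which holds term by term by the two hypotheses.)

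Finally I would upgrade $B$ from $S$-invariant to jointly $S$- and $T$-invariant: for $x\in B$ one has $Tx=(T+S)x-Sx=\lambda x-Sx$, and both $\lambda x$ and $Sx$ lie in $B$, so $Tx\in B$. On $B$ the relation $(T+S)_{B}=\lambda I_{B}$ forces $T_{B}=\lambda I_{B}-S_{B}$, whence $S_{B}T_{B}=\lambda S_{B}-S_{B}^{2}=T_{B}S_{B}$, which is the desired $S_{B}\in\mbox{comm}(T_{B})$. I do not expect any genuine difficulty here; the only point demanding care is the bookkeeping of restrictions, namely checking that $M$ (resp.\ $B$) is invariant under \emph{both} operators before the restricted commutation statement is even meaningful. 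This is painless because in each case the relevant subspace is an eigenspace on which one of the operators reduces to a scalar.
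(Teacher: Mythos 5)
Your proposal is correct and follows essentially the same route as the paper: establish $S$-invariance of $M$ from the identity $ST^{2}=TST$, deduce $S(T+S)\in\mbox{comm}(T+S)$ from the two hypotheses (the paper asserts this directly where you verify it or cite Remark \ref{rema1}(iv)), and then apply the first assertion to $T+S$, using that $B$ is both $S$- and $T$-invariant. The only cosmetic difference is that you observe $T_{M}=\lambda I_{M}$ outright, while the paper invokes the invertibility of $T_{M}$ together with $ST\in\mbox{comm}(T)$; both yield the same conclusion.
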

\begin{proof} Let $x\in M,$ then $(T-\lambda I)S(\lambda x)=(T-\lambda I)ST(x)=ST(T-\lambda I)(x)=0.$ Thus $M$ is $S$-invariant. On the other hand, as $T_{M}$ is invertible and $ST \in \mbox{comm}(T),$ then $S_{M}\in\mbox{comm}(T_{M}).$  If in addition $TS \in \mbox{comm}(S),$ then $S(T+S) \in \mbox{comm}(T+S)$ and  thus $B$ is $S$-invariant and $T$-invariant. Therefore $S_{B}\in\mbox{comm}(T_{B}).$
\end{proof}
\begin{thm}\label{thmn.1} Let    $T \in L(X)$   and   $N \in \mbox{Nil}(L(X))$     such that   $T\in \mbox{comm}(NT).$     Then    
  $$\sigma_{p}(T)\setminus\{0\}\subset \sigma_{p}(T+N)\setminus\{0\}.$$
\end{thm}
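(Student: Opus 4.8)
The plan is to restrict everything to the eigenspace of $T$ associated with the fixed nonzero eigenvalue and to manufacture, inside that subspace, an eigenvector of $T+N$ for the same eigenvalue. Concretely, I would fix $\lambda\in\sigma_{p}(T)\setminus\{0\}$, so that $M:=\mathcal{N}(T-\lambda I)\neq\{0\}$, and aim to find a nonzero $y\in M$ with $(T+N)y=\lambda y$.

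First I would observe that the hypothesis $T\in\mbox{comm}(NT)$ is exactly $NT\in\mbox{comm}(T)$ (commutation being symmetric), which puts us in position to apply Lemma \ref{lemMinv} with $S=N$. That lemma yields that $M$ is $N$-invariant, so that the restriction $N_{M}:=N_{M}$ is a well-defined element of $L(M)$; the accompanying conclusion $N_{M}\in\mbox{comm}(T_{M})$ is automatic here since $T_{M}=\lambda I_{M}$. It is worth stressing that the hypothesis $\lambda\neq 0$ is used in an essential way at this step: it is precisely what makes the $N$-invariance of $M$ follow from $NT^{2}=TNT$ in the proof of Lemma \ref{lemMinv}.

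Next I would use nilpotency. Since $N\in\mbox{Nil}(L(X))$, say $N^{d}=0$, the restriction satisfies $N_{M}^{d}=0$, so $N_{M}$ is nilpotent on $M$. The key point is that a nilpotent operator on a nonzero space always has nontrivial kernel: if $N_{M}=0$ any nonzero vector of $M$ works, while if $N_{M}\neq 0$ one sets $d_{0}=d(N_{M})\geq 2$, chooses $z\in M$ with $N_{M}^{d_{0}-1}z\neq 0$, and puts $y=N_{M}^{d_{0}-1}z$, so that $y\neq 0$ and $N_{M}y=N_{M}^{d_{0}}z=0$. In either case there is a nonzero $y\in M$ with $N_{M}y=0$. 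Then $y\in M$ gives $Ty=\lambda y$, and $N$-invariance of $M$ gives $Ny=N_{M}y=0$, whence $(T+N)y=\lambda y$ with $y\neq 0$; this shows $\lambda\in\sigma_{p}(T+N)\setminus\{0\}$.

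The argument is short, and the one point deserving care is that $M$ need not be finite-dimensional, so the existence of the desired eigenvector cannot simply be read off from finite-dimensional Jordan theory. This is exactly where genuine nilpotency, rather than mere quasi-nilpotency, is indispensable: a nilpotent operator automatically fails to be injective on any nonzero space, and that failure is what produces the kernel vector $y$ we need. Thus the main (and only real) obstacle is reducing the global perturbation problem to the single subspace $M$, which Lemma \ref{lemMinv} accomplishes.
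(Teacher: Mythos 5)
Your proof is correct, and it takes a genuinely different route from the paper's. The paper also invokes Lemma \ref{lemMinv} to get that $M=\mathcal{N}(T-\lambda I)$ is $N$-invariant (in the form $TN^{m}x=\lambda N^{m}x$ for $x\in M$), but it then proves by induction the identity $((T+N)-\lambda I)^{n}(x)=N^{n}(x)$ for every $x\in M$, so that with $N^{p}=0$ one gets the full kernel inclusion $\mathcal{N}(T-\lambda I)\subset\mathcal{N}\bigl(((T+N)-\lambda I)^{p}\bigr)$ and hence $\lambda\in\sigma_{p}(T+N)$. You instead exploit the $N$-invariance of $M$ directly: the restriction $N_{M}$ is nilpotent on a nonzero space, hence has nontrivial kernel, and any $0\neq y\in M$ with $Ny=0$ is an honest eigenvector of $T+N$ for $\lambda$. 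Your choice of $y$ makes the induction unnecessary (your $y$ satisfies the paper's identity trivially, since $((T+N)-\lambda I)y=Ny=0$), and it produces a genuine eigenvector rather than a generalized one, so you never need the remark that $\mathcal{N}(A^{p})\neq\{0\}$ forces $\mathcal{N}(A)\neq\{0\}$. What you lose is the quantitative conclusion: the paper's induction yields the inclusion of the \emph{entire} eigenspace into $\mathcal{N}\bigl(((T+N)-\lambda I)^{p}\bigr)$, which is recorded as Proposition \ref{propkernel} and reused later (for instance to control $\alpha$ and obtain the $\sigma_{p}^{0}$ statements, and via the Berberian extension for $\sigma_{a}$); your single eigenvector gives no lower bound on $\alpha((T+N)-\lambda I)$ beyond its being positive. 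For the theorem as stated, however, your argument is complete and arguably more economical.
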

\begin{proof}  Let $\lambda \neq 0$ and let $x\in \mathcal{N}(T-\lambda I).$  We  show by induction that  $((T+N)-\lambda I)^{n}(x)=N^{n}(x)$ for any  $n\in \N.$  Indeed, $((T+N)-\lambda I)(x)=N(x).$ Assume   that $((T+N)-\lambda I)^{n}(x)=N^{n}(x)$ for some positive  integer $n.$ Then $((T+N)-\lambda I)^{n+1}(x)=((T+N)-\lambda I)(N^{n}(x))=TN^{n}(x)+N^{n+1}(x)-\lambda N^{n}(x).$ Furthermore,   Lemma  \ref{lemMinv}  implies that   $TN^{m}(x)=\lambda N^{m}x$ for all $m\in \N.$  Hence  $((T+N)-\lambda I)^{n+1}(x)=N^{n+1}(x).$  Let $p\geq1$  such that $N^{p}=0,$ then $((T+N)-\lambda I)^{p}(x)=0$ and thus  $x\in \mathcal{N}(((T+N)-\lambda I)^{p}).$ This yields   $\mathcal{N}(T-\lambda I) \subset  \mathcal{N}((T+N)-\lambda I)^{p}).$  Hence $\sigma_{p}(T)\setminus\{0\}\subset \sigma_{p}(T+N)\setminus\{0\}.$ 
\end{proof}
From   the proof of  Theorem \ref{thmn.1}, we obtain the next proposition.
\begin{prop}\label{propkernel} Let $T, N \in L(X)$     such that   $T\in \mbox{comm}(NT)$ and   $N^{p}=0$ for some  strictly positive integer $p.$  Then for every  $\lambda\neq 0,$ we have  $\mathcal{N}(T-\lambda I) \subset  \mathcal{N}((T+N)-\lambda I)^{p}).$ If in addition  $N \in \mbox{comm}(TN),$ then $\mathcal{N}((T+N)-\lambda I) \subset  \mathcal{N}((T-\lambda I)^{p}).$
\end{prop}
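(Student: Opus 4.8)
The plan is to handle the two inclusions separately, deriving the second from the first by exploiting the formal symmetry between $T$ and $T+N$. For the first inclusion there is essentially nothing new to do: the proof of Theorem \ref{thmn.1} already establishes, for $x\in\mathcal{N}(T-\lambda I)$ and every $n\in\N$, the identity $((T+N)-\lambda I)^{n}(x)=N^{n}(x)$, the induction resting solely on $T\in\mbox{comm}(NT)$ through Lemma \ref{lemMinv} (which supplies $TN^{m}(x)=\lambda N^{m}(x)$). Specializing to $n=p$ and invoking $N^{p}=0$ gives $((T+N)-\lambda I)^{p}(x)=0$, i.e. $x\in\mathcal{N}(((T+N)-\lambda I)^{p})$. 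So I would simply record that this first inclusion is the content of the displayed induction in Theorem \ref{thmn.1}, valid under $T\in\mbox{comm}(NT)$ alone.

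For the second inclusion I would first translate the hypotheses into the language of $\mbox{comm}_{r}$. The condition $T\in\mbox{comm}(NT)$ reads $NT\in\mbox{comm}(T)$, and the extra assumption $N\in\mbox{comm}(TN)$ reads $TN\in\mbox{comm}(N)$; together these are exactly the two defining conditions of $N\in\mbox{comm}_{r}(T)$. Applying Remark \ref{rema1}(iv) then propagates this to $T+N\in\mbox{comm}_{r}(N)$. Unwinding the definition of $\mbox{comm}_{r}$ with base $N$ and element $T+N$, one of its two components is $N(T+N)\in\mbox{comm}(T+N)$, equivalently $(T+N)\in\mbox{comm}\bigl((-N)(T+N)\bigr)$ (commutation being insensitive to the scalar $-1$).

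With this identity secured, I would reapply the first inclusion, but now to the pair formed by the base operator $T+N$ and the nilpotent perturbation $-N$, which satisfies $(-N)^{p}=0$. The commutation hypothesis demanded by the first inclusion for this pair is precisely $(T+N)\in\mbox{comm}\bigl((-N)(T+N)\bigr)$, furnished in the previous step. It therefore yields $\mathcal{N}((T+N)-\lambda I)\subset\mathcal{N}(((T+N)-N-\lambda I)^{p})=\mathcal{N}((T-\lambda I)^{p})$, which is the desired conclusion. The step I expect to be the only genuine content — everything else being bookkeeping — is verifying that the swapped pair $(T+N,-N)$ still meets the commutation requirement $N(T+N)\in\mbox{comm}(T+N)$; this is exactly where the supplementary assumption $N\in\mbox{comm}(TN)$ enters, channelled through Remark \ref{rema1}(iv), and without it there would be no reason for $T+N$ to commute with $N(T+N)$ and the symmetry argument would break down.
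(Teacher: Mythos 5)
Your proposal is correct and follows essentially the route the paper intends: the first inclusion is read off from the induction in Theorem \ref{thmn.1}, and the second is obtained by applying it to the pair $(T+N,-N)$, the needed hypothesis $(T+N)\in\mbox{comm}\bigl(-N(T+N)\bigr)$ coming from $N\in\mbox{comm}_{r}(T)$ exactly as the paper itself does in the proof of Proposition \ref{propn.aaa}(iii). Your explicit justification via Remark \ref{rema1}(iv) fills in a step the paper leaves implicit.
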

Note that   in the case of   $NT\in \mbox{comm}(T)$ and $N^{2}=0,$ the  following proposition shows  (without the condition   $TN\in \mbox{comm}(N)$) that  $\mathcal{N}((T+N)-\lambda I) \subset  \mathcal{N}((T-\lambda I)^{2}),$  which implies in turn that   the  inclusion proved in  Theorem \ref{thmn.1} becomes equality.
\begin{prop}\label{propn2zero} Let    $T \in L(X)$   and   $N \in \mbox{Nil}(L(X))$     such that   $NT\in \mbox{comm}(T)$ and $N^{2}=0.$     Then $\sigma_{p}(T)\setminus\{0\}= \sigma_{p}(T+N)\setminus\{0\}$ and $\sigma_{p}^0(T)\setminus\{0\}=\sigma_{p}^0(T+N)\setminus\{0\}.$
\end{prop}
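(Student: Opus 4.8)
The plan is to establish the two kernel inclusions
$$\mathcal{N}(T-\lambda I)\subset\mathcal{N}((T+N-\lambda I)^2)\quad\text{and}\quad\mathcal{N}((T+N)-\lambda I)\subset\mathcal{N}((T-\lambda I)^2)$$
for every $\lambda\neq0$, and then to read off both spectral equalities from them. The first inclusion is already available: since $NT\in\mbox{comm}(T)$ is the same as $T\in\mbox{comm}(NT)$ and $N^2=0$, Proposition \ref{propkernel} (with $p=2$) supplies it at once. So the entire content of the statement sits in the second inclusion, which is precisely the place where the extra hypothesis $TN\in\mbox{comm}(N)$ of Proposition \ref{propkernel} is traded for $N^2=0$.

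For the second inclusion, fix $\lambda\neq0$ and $x$ with $(T+N)x=\lambda x$, so that $Tx=\lambda x-Nx$ and hence $(T-\lambda I)^2x=\lambda Nx-TNx$; thus it suffices to prove that $Nx$ is a $\lambda$-eigenvector of $T$, i.e. $TNx=\lambda Nx$. First I would apply $N$ to $(T+N)x=\lambda x$ and use $N^2=0$ to obtain $NTx=\lambda Nx$. Next I would compute $NT^2x$ in two ways: directly, $NT^2x=\lambda NTx-NTNx=\lambda^2Nx-NTNx$; and through the commutation $NT^2=TNT$ coming from $NT\in\mbox{comm}(T)$, $NT^2x=T(NTx)=\lambda TNx$. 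Equating the two yields
$$\lambda^2Nx-NTNx=\lambda TNx. \qquad (\ast)$$

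The key step — and the only place I expect any real subtlety — is to apply $N$ once more to $(\ast)$: since $N^2=0$ kills the first term $\lambda^2N^2x$ and the term $N\cdot NTNx=N^2TNx$, only $\lambda NTNx$ survives, forcing $NTNx=0$ because $\lambda\neq0$. Substituting $NTNx=0$ back into $(\ast)$ gives $\lambda^2Nx=\lambda TNx$, whence $TNx=\lambda Nx$ and therefore $(T-\lambda I)^2x=0$. This proves the second inclusion.

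Finally I would deduce the two equalities from the inclusions. For a nonzero $\lambda$, the elementary fact $\mathcal{N}(S^2)\neq\{0\}\iff\mathcal{N}(S)\neq\{0\}$ turns the two inclusions into the equivalence $\lambda\in\sigma_p(T)\iff\lambda\in\sigma_p(T+N)$, giving $\sigma_p(T)\setminus\{0\}=\sigma_p(T+N)\setminus\{0\}$. For the punctured $\sigma_p^0$ equality I would add the bound $\alpha(S^2)\leq2\alpha(S)$: if $\alpha(T-\lambda I)<\infty$ then $\alpha((T-\lambda I)^2)<\infty$, so the second inclusion forces $\alpha((T+N)-\lambda I)<\infty$, and symmetrically the first inclusion transfers finiteness in the opposite direction; combined with the equality of the nonzero point spectra already obtained, this yields $\sigma_p^0(T)\setminus\{0\}=\sigma_p^0(T+N)\setminus\{0\}$.
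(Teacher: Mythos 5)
Your proof is correct and follows essentially the same route as the paper: both obtain $\mathcal{N}(T-\lambda I)\subset\mathcal{N}(((T+N)-\lambda I)^{2})$ from Proposition \ref{propkernel} and verify $\mathcal{N}((T+N)-\lambda I)\subset\mathcal{N}((T-\lambda I)^{2})$ by a direct algebraic computation using $NT^{2}=TNT$ and $N^{2}=0$. Your version is slightly tidier in isolating the intermediate identity $NTNx=0$ and in spelling out the $\alpha(S^{2})\leq 2\alpha(S)$ step for $\sigma_{p}^{0}$, which the paper leaves implicit, but the substance is the same.
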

\begin{proof} Let $\lambda \neq 0$ and let us to show  that  $\mathcal{N}(T+N-\lambda I) \subset  \mathcal{N}((T-\lambda I)^{2}).$ Let $x\in \mathcal{N}(T+N-\lambda I),$ then $(T+N)x=\lambda x.$  So $\lambda^{2}(T-\lambda I)^{2}x=\lambda^{2}(T-\lambda I)(-N)x=\lambda^{2}(-TNx+N(\lambda x)) =\lambda^{2}(-TNx+N(T+N)x)= \lambda^{2}(N^{2}x+(NT-TN)x).$ Moreover, we have   $\lambda NTx=NT^{2}x+NTNx=TNTx+NTNx=TN(\lambda I-N)x+NTNx=\lambda TNx-TN^{2}x+NTNx$ and  $\lambda NTNx=NTNTx+NTN^{2}x=N^{2}T^{2}x+NTN^{2}x.$ Hence  $\lambda^{2}(T-\lambda I)^{2}x=\lambda^{2}N^{2}x+N^{2}T^{2}x+NTN^{2}x-\lambda TN^{2}x=0$ and then    $x\in  \mathcal{N}((T-\lambda I)^{2}).$ On the other hand, from proposition \ref{propkernel}, we have $\mathcal{N}(T+N-\lambda I) \subset  \mathcal{N}((T-\lambda I)^{2}).$  Hence $\sigma_{p}(T)\setminus\{0\}= \sigma_{p}(T+N)\setminus\{0\}$ and $\sigma_{p}^0(T)\setminus\{0\}=\sigma_{p}^0(T+N)\setminus\{0\}.$
\end{proof}
\begin{cor}\label{corn.2}  Let   $T \in L(X)$   and   $N \in \mbox{Nil}(L(X)).$  The following  assertions hold:\\
(i) If  $T \in \mbox{comm}(NT)\cap\mbox{comm}(TN),$  then $\sigma_{p}(T)\setminus\{0\}\subset\sigma_{p}(T+N)\setminus\{0\}$ and    $\sigma_{p}(T^{*})\setminus\{0\}\subset\sigma_{p}(T^{*}+N^{*})\setminus\{0\}.$ \\
(ii) If  $N\in\mbox{comm}_{r}(T),$ then  $\sigma_{p}(T)\setminus\{0\}=\sigma_{p}(T+N)\setminus\{0\} \text{ and } \sigma_{p}^0(T)\setminus\{0\}=\sigma_{p}^0(T+N)\setminus\{0\}.$
\end{cor}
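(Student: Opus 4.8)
The plan is to reduce everything to the two results already established under the one-sided hypothesis, namely Theorem \ref{thmn.1} and Proposition \ref{propkernel}, together with duality via Proposition \ref{propn.1}.

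For (i), the inclusion $\sigma_p(T)\setminus\{0\}\subset\sigma_p(T+N)\setminus\{0\}$ is immediate from Theorem \ref{thmn.1}, since $T\in\mbox{comm}(NT)$. For the dual inclusion I would first note that $N^*$ is nilpotent, because $(N^*)^p=(N^p)^*=0$, and then observe that $T\in\mbox{comm}(TN)$, i.e. $TN\in\mbox{comm}(T)$, is equivalent by Proposition \ref{propn.1}(i) to $N^*T^*\in\mbox{comm}(T^*)$, that is $T^*\in\mbox{comm}(N^*T^*)$. Applying Theorem \ref{thmn.1} to the pair $(T^*,N^*)$ then gives $\sigma_p(T^*)\setminus\{0\}\subset\sigma_p(T^*+N^*)\setminus\{0\}$, which is exactly the claim.

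For (ii), unravelling the definition of $\mbox{comm}_r$, the hypothesis $N\in\mbox{comm}_r(T)$ yields simultaneously $NT\in\mbox{comm}(T)$ (equivalently $T\in\mbox{comm}(NT)$) and $TN\in\mbox{comm}(N)$ (equivalently $N\in\mbox{comm}(TN)$); thus both hypotheses of Proposition \ref{propkernel} are in force. Fixing $\lambda\neq0$ and choosing $p$ with $N^p=0$, Proposition \ref{propkernel} supplies the two inclusions $\mathcal{N}(T-\lambda I)\subset\mathcal{N}((T+N-\lambda I)^p)$ and $\mathcal{N}(T+N-\lambda I)\subset\mathcal{N}((T-\lambda I)^p)$. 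The first shows that if $T-\lambda I$ has nontrivial kernel then so does $(T+N-\lambda I)^p$, hence so does $T+N-\lambda I$ itself, since a power of an injective operator is injective; the second gives the reverse implication. This proves $\sigma_p(T)\setminus\{0\}=\sigma_p(T+N)\setminus\{0\}$.

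For the $\sigma_p^0$ equality I would transfer finiteness of the kernel dimension through the same two inclusions, using the elementary bound $\alpha(S^p)\leq p\,\alpha(S)$, which holds because $S$ induces injections $\mathcal{N}(S^{k})/\mathcal{N}(S^{k-1})\hookrightarrow\mathcal{N}(S^{k-1})/\mathcal{N}(S^{k-2})$. Indeed, if $\alpha(T-\lambda I)<\infty$ then $\alpha((T-\lambda I)^p)\leq p\,\alpha(T-\lambda I)<\infty$, whence $\alpha(T+N-\lambda I)<\infty$ by the inclusion above; the symmetric argument gives the converse. Combined with the point-spectrum equality this yields $\sigma_p^0(T)\setminus\{0\}=\sigma_p^0(T+N)\setminus\{0\}$. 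The only delicate points are the bookkeeping of adjoints in (i) and the observation that the $\sigma_p^0$ statement genuinely needs the inequality $\alpha(S^p)\le p\,\alpha(S)$ to pass between a kernel and the kernel of its $p$-th power; the rest is a direct appeal to the cited results.
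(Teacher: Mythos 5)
Your proposal is correct and follows the same route the paper intends: part (i) via Theorem \ref{thmn.1} together with the duality of Proposition \ref{propn.1}(i) applied to $(T^{*},N^{*})$, and part (ii) via the two kernel inclusions of Proposition \ref{propkernel}. The paper merely states ``(i) is obvious and (ii) is a consequence of Proposition \ref{propkernel}''; your write-up supplies exactly the missing details, including the bound $\alpha(S^{p})\leq p\,\alpha(S)$ needed for the $\sigma_{p}^{0}$ equality.
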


\begin{proof} (i) is obvious and  (ii) is a consequence of Proposition \ref{propkernel}.
\end{proof}
The condition assumed in assertions (ii) of the previous corollary cannot guarantee  that    $\sigma_{p}(T)=\sigma_{p}(T+N)$ or  $\sigma_{p}^{0}(T)= \sigma_{p}^{0}(T+N),$ as the following examples shows.
\begin{ex}\label{exnilpotent} Let $T, N \in L(\ell^{2})$ be the operators defined by $T(x)=(0,\frac{x_{1}}{2},\frac{x_{2}}{3},\dots),$  $N(x)=(0,\frac{-x_{1}}{2}, 0,\dots)$ for every  $x=(x_{n})_{n\geq 1} \in \ell^{2}.$ Clearly, $T$ and $T+N$ are  quasi-nilpotent and  compact,  $N$ is nilpotent. Moreover, $TNT=NT^{2}=NT=N^{2}T=NTN=TN^{2}\neq T^{2}N$  and $\sigma_{p}(T)=\sigma_{p}^{0}(T)=\emptyset\neq \{0\}=\sigma_{p}^{0}(T+N)=\sigma_{p}(T+N).$   If we take the nilpotent operator $Q \in L(\ell^{2})$  defined by  $Q(x)=(0,\frac{-x_{1}}{2}, 0,\frac{-x_{3}}{4},0, \frac{-x_{5}}{6},\dots),$ then $(T+Q)^{2}=0,$   $TQT=QT^{2}=QT=Q^{2}T=QTQ=TQ^{2}\neq T^{2}Q$ and     $\sigma_{p}(T)=\sigma_{p}^{0}(T)=\sigma_{p}^{0}(T+Q)=\emptyset\neq \{0\}=\sigma_{p}(T+Q).$
\end{ex}
\begin{rema}  Let $T \in L(X).$ It is well known that  $\sigma_{p}(T)=\sigma_{p}(T+N)$  for every operator   $N \in \mbox{comm}(T)\cap\mbox{Nil}(L(X)).$ This result cannot be extended  for  operator  $N \in \displaystyle\left[\mbox{comm}(T^{2})\cap\mbox{Nil}(L(X))\right]\cup \left[\mbox{Nil}(L(X))\cap N^{-1}(\mbox{comm}(T))\right],$   as the following  shows.    The nilpotent operators $T$ and $N$ defined in the point (iii) of the     Example \ref{s.ex} satisfy $\emptyset=\sigma_{p}(T)\setminus\{0\}=\sigma_{p}(N)\setminus\{0\}\neq \{-1,1\}=\sigma_{p}(T+N)\setminus\{0\},$ although  $TN^{2}=N^{2}T=NT^{2}=T^{2}N=0.$ Note also that $\sigma_{p}(S)=\sigma_{a}(S)=\sigma_{s}(S)=\sigma(S)$ for all $S\in\{T, N, T+N\}.$
\end{rema}

To give further information about the approximate point spectrum  of sums of operators we need to introduce the {\it Berberian-Quisley extension} \cite{berberian,rickart}. Consider     $\ell^{\infty}(X)$   the Banach space   of all   bounded sequences $x = (x_{n})$ of  $X$ by imposing term-by-term linear combination and the supremum norm $\|x\| = \mbox{sup}\|x_{n}\|.$  Then the quotient space  $X_{0}=\ell^{\infty}(X)/c_{0}(X)$ is a Banach space, where  $c_{0}(X)=\{ (x_{n})\subset  X :  \mbox{lim}\, \|x_{n}\|=0\}.$ Any operator $T \in L(X)$  generates an operator $T^{0} \in L(X_{0})$ defined by  $T^{0}(x+c_{0}(X))=(Tx_{n})_{n}+c_{0}(X)$ for every $x = (x_{n}) \in \ell^{\infty}(X).$  The  mapping $T$ $\longrightarrow$ $T^{0}$  of $L(X)$ into $L(X_{0})$  is an isometric isomorphism and $\sigma_{a}(T) =\sigma_{a}(T^{0}) = \sigma_{p}(T^{0}).$
\begin{prop}\label{propn.aaa}
Let  $T \in L(X)$  and let  $N\in \mbox{Nil}(L(X)).$ The following assertions hold:\\
(i)  If $T \in \mbox{comm}(NT),$ then  $\sigma_{a}(T)\setminus\{0\}\subset \sigma_{a}(T+N)\setminus\{0\},$ and  if in addition   $N^{2}=0,$  then   $\sigma_{a}(T)\setminus\{0\}= \sigma_{a}(T+N)\setminus\{0\}.$    While  if   $T \in \mbox{comm}(TN),$ then $\sigma_{s}(T)\setminus\{0\}\subset \sigma_{s}(T+N)\setminus\{0\},$ and   if in addition   $N^{2}=0,$  then   $\sigma_{s}(T)\setminus\{0\}= \sigma_{s}(T+N)\setminus\{0\}.$  \\
(ii)  If $T \in \mbox{comm}(NT)\cap\mbox{comm}(TN),$ then  $\sigma_{*}(T)\setminus\{0\}\subset \sigma_{*}(T+N)\setminus\{0\},$ and if   in addition   $N^{2}=0,$  then   $\sigma_{*}(T)\setminus\{0\}=\sigma_{*}(T+N)\setminus\{0\},$   where $\sigma_{*}\in\{\sigma_{a},\sigma_{s},\sigma\}.$\\
(iii)  If   $N\in\mbox{comm}_{r}(T),$ then $\sigma_{a}(T)\setminus\{0\}=\sigma_{a}(T+N)\setminus\{0\}.$\\
(iv)  If   $N\in\mbox{comm}_{l}(T),$ then $\sigma_{s}(T)\setminus\{0\}=\sigma_{s}(T+N)\setminus\{0\}.$\\
(v) If $N \in \mbox{comm}_{w}(T),$  then   $\sigma_{*}(T)=\sigma_{*}(T+N),$  where $\sigma_{*}\in\{\sigma_{p},\sigma_{p}^{0},\sigma_{a},\sigma_{s},\sigma\}.$
\end{prop}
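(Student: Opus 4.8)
The plan is to prove Proposition \ref{propn.aaa} by assembling the five assertions from the duality machinery of the Berberian--Quisley extension together with the kernel-inclusion results already established. I would organize the argument so that (iii) and (iv) are dual to each other, and so that (v) is deduced from the combination (iii)$\wedge$(iv) plus the spectral mapping identities $\sigma_a(T)\cup\sigma_s(T)=\sigma(T)$ and the relation $\sigma_s(T)=\sigma_a(T^{*})$.

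First I would treat (i). For the $\sigma_a$ half, the key is that the inclusion $\mathcal{N}(T-\lambda I)\subset\mathcal{N}((T+N)-\lambda I)^{p})$ of Proposition \ref{propkernel}, which holds under $T\in\mbox{comm}(NT)$, lifts verbatim to the Berberian extension: since $N^{0}$ is nilpotent of the same degree, $(T^{0})\in\mbox{comm}(N^{0}T^{0})$, so applying Proposition \ref{propkernel} to $T^{0}$ and using $\sigma_a(T)=\sigma_p(T^{0})$ gives $\sigma_a(T)\setminus\{0\}\subset\sigma_a(T+N)\setminus\{0\}$. When $N^{2}=0$, Proposition \ref{propn2zero} (again lifted to $X_{0}$, where only the condition $NT\in\mbox{comm}(T)$ is needed) upgrades this to equality. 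The $\sigma_s$ statement is obtained by dualizing: $\sigma_s(T)=\sigma_a(T^{*})$, and $T\in\mbox{comm}(TN)$ is equivalent by Proposition \ref{propn.1}(i) to $N^{*}T^{*}\in\mbox{comm}(T^{*})$, i.e. $T^{*}\in\mbox{comm}(N^{*}T^{*})$, so the $\sigma_a$ result applied to $T^{*}$ yields the $\sigma_s$ result for $T$.

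Next, (ii) is immediate: the two hypotheses $T\in\mbox{comm}(NT)$ and $T\in\mbox{comm}(TN)$ give simultaneously the $\sigma_a$ and $\sigma_s$ conclusions of (i), and $\sigma(S)=\sigma_a(S)\cup\sigma_s(S)$ forces the $\sigma$ statement; the $N^{2}=0$ upgrades propagate the same way. For (iii), the hypothesis $N\in\mbox{comm}_{r}(T)$ means $NT\in\mbox{comm}(T)$ and $TN\in\mbox{comm}(N)$; the first gives the inclusion $\sigma_a(T)\setminus\{0\}\subset\sigma_a(T+N)\setminus\{0\}$ by (i). For the reverse inclusion I would apply the same result to the pair $(T+N,-N)$: by Remark \ref{rema1}(iv), $N\in\mbox{comm}_{r}(T)$ yields $T+N\in\mbox{comm}_{r}(N)$, hence $(-N)\in\mbox{comm}_{r}(T+N)$, and since $(T+N)+(-N)=T$ the inclusion runs back the other way. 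Assertion (iv) is the exact dual via $\sigma_s(T)=\sigma_a(T^{*})$ and the fact (Proposition \ref{propn.1}(i)) that $N\in\mbox{comm}_{l}(T)$ transports to $N^{*}\in\mbox{comm}_{r}(T^{*})$.

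Finally (v): under $N\in\mbox{comm}_{w}(T)=\mbox{comm}_{l}(T)\cap\mbox{comm}_{r}(T)$ I get equality of $\sigma_a$ off zero from (iii) and of $\sigma_s$ off zero from (iv), hence equality of $\sigma=\sigma_a\cup\sigma_s$ off zero; the point-spectrum equalities off zero come from Corollary \ref{corn.2}(ii). The remaining task is to include the point $0$, and this is the step I expect to be the main obstacle, since all earlier results are stated only modulo $\{0\}$. The natural route is to observe that $N\in\mbox{comm}_{w}(T)$ makes $N$ commute with $T$ in the strong sense needed at $0$: by Lemma \ref{lemnilp}(ii) the nilpotent $N$ and the perturbation behave so that $T$ and $T+N$ share the same nullity/defect structure at $\lambda=0$, and one argues directly that $0\in\sigma_{*}(T)\iff 0\in\sigma_{*}(T+N)$ for each $\sigma_{*}$ using that $N^{p}=0$ together with the commutation relations that hold genuinely (not just modulo $\{0\}$) when both $N\in\mbox{comm}_{r}(T)$ and $N\in\mbox{comm}_{l}(T)$. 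I would handle $0$ case-by-case for $\sigma_p,\sigma_p^{0},\sigma_a,\sigma_s,\sigma$, using the nilpotency of $N$ to show, for instance, that $T$ is bounded below (resp. onto, injective, surjective) if and only if $T+N$ is, which is precisely the content of Proposition \ref{propn.3} once one checks via Corollary \ref{corn.1} that the two-sided hypothesis forces genuine commutation on the relevant invariant pieces.
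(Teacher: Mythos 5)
Your proposal is correct and follows essentially the same route as the paper: part (i) via the Berberian--Quisley extension combined with the kernel inclusions of Theorem \ref{thmn.1}/Proposition \ref{propkernel} and Proposition \ref{propn2zero}, the $\sigma_{s}$ statements by duality through Proposition \ref{propn.1}, the reverse inclusion in (iii) by applying (i) to the pair $(T+N,-N)$ via Remark \ref{rema1}(iv), and the point $0$ in (v) via Proposition \ref{propn.3} and Corollary \ref{corn.1}. The only blemish is the passing appeal to Lemma \ref{lemnilp}(ii) in part (v), which concerns sums of two nilpotents and is not relevant here, but you immediately fall back on the correct tools, so nothing is lost.
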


\begin{proof}
(i) Since $T \in \mbox{comm}(NT)$  then $T^{0}N^{0}T^{0}=(TNT)^{0}=(NT^2)^{0}=N^{0}(T^{0})^{2}.$  So  $T^{0}\in \mbox{comm}(N^{0}T^{0}).$  Moreover,  $\|(N^{0})^{p}\|=\|(N^{p})^{0}\|=0$ and then $N^{0}$ is   nilpotent. From Theorem  \ref{thmn.1}, $\sigma_{a}(T)\setminus\{0\}=\sigma_{p}(T^{0})\setminus\{0\}\subset\sigma_{p}(T^{0}+N^{0})\setminus\{0\}=\sigma_{p}((T+N)^{0})\setminus\{0\}=\sigma_{a}(T+N)\setminus\{0\}.$ If in  addition   $N^{2}=0,$  then  we deduce from Corollary \ref{propn2zero}   that  $\sigma_{a}(T)\setminus\{0\}= \sigma_{a}(T+N)\setminus\{0\}.$   While if  $T \in \mbox{comm}(TN),$ then $T^{*} \in \mbox{comm}(N^{*}T^{*})$ and thus  $\sigma_{s}(T)\setminus\{0\}=\sigma_{a}(T^{*})\setminus\{0\}\subset\sigma_{a}(T^{*}+N^{*})\setminus\{0\}=\sigma_{a}((T+N)^{*})\setminus\{0\}=\sigma_{s}(T+N)\setminus\{0\},$ and if in addition $N^{2}=0,$  then   $\sigma_{s}(T)\setminus\{0\}= \sigma_{s}(T+N)\setminus\{0\}.$   The point (ii) follows directly from the  first.\\
(iii)   As    $N\in\mbox{comm}_{r}(T)$ then  $T \in \mbox{comm}(NT)$ and  $(T+N) \in \mbox{comm}(-N(T+N)).$ So the  first point gives the desired result.  The proof of  (iv) goes similarly with (i), while the proof  of   (v)   is a consequence of  (iii), (iv) and  Proposition \ref{propn.3}.
\end{proof}
\begin{cor} Let $\A$ be an  arbitrary unital complex Banach algebra and let $x\in \A$ and $a\in \mbox{Nil}(\A).$ The following statements hold:\\
(i) If $x \in\mbox{comm}(ax)\cap\mbox{comm}(xa),$  then $\sigma(x)\setminus\{0\}\subset\sigma(x+a)\setminus\{0\}.$ If in addition $a^{2}=0,$ then  $\sigma(x)\setminus\{0\}=\sigma(x+a)\setminus\{0\}.$\\
(ii) If $x \in\mbox{comm}_{w}(a),$  then $\sigma(x)=\sigma(x+a).$
\end{cor}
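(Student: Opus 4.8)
The plan is to deduce both assertions from the operator-theoretic results already established for $L(X)$ — concretely from Proposition \ref{propn.aaa}(ii) and (v) — by means of the left regular representation of $\A$ on itself. Thus I would transport the element $x$ to the multiplication operator $L_{x}\in L(\A)$ and the nilpotent $a$ to $L_{a}$, check that the algebraic hypotheses of the corollary become exactly the hypotheses of Proposition \ref{propn.aaa}, and finally read the spectral conclusions back in $\A$ using the fact that $L$ preserves spectra.

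First I would set up $L\colon \A\to L(\A)$, $L_{x}(y)=xy$. Since $\A$ is unital, $L$ is an isometric, unit-preserving algebra monomorphism ($\|L_{x}\|=\|x\|$ because $\|L_{x}\|\geq\|L_{x}e\|=\|x\|$ and $\|L_{x}y\|\leq\|x\|\|y\|$). The key point I would verify is the spectral identity $\sigma_{\A}(x)=\sigma_{L(\A)}(L_{x})$ for every $x\in\A$: one inclusion is automatic because a unital homomorphism sends invertibles to invertibles, so $\sigma(L_{x})\subset\sigma(x)$; for the reverse, if $L_{x-\lambda e}=L_{x}-\lambda I$ is invertible in $L(\A)$, then its surjectivity produces $y$ with $(x-\lambda e)y=e$ and its injectivity upgrades this right inverse to a genuine two-sided inverse, whence $\lambda\notin\sigma_{\A}(x)$. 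In particular $\sigma(x)=\sigma(L_{x})$ and, since $L_{x+a}=L_{x}+L_{a}$, also $\sigma(x+a)=\sigma(L_{x}+L_{a})$.

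Next I would translate the hypotheses. As $L$ is an injective homomorphism, $L_{uv}=L_{u}L_{v}$ and $L_{u}=L_{v}\Leftrightarrow u=v$, so any identity between words in the operators is equivalent to the corresponding identity between the elements. Writing $T=L_{x}$ and $N=L_{a}$, one checks directly that $x\in\mbox{comm}(ax)\Leftrightarrow L_{x}L_{ax}=L_{ax}L_{x}\Leftrightarrow xax=ax^{2}\Leftrightarrow T\in\mbox{comm}(NT)$, and likewise $x\in\mbox{comm}(xa)\Leftrightarrow T\in\mbox{comm}(TN)$; decoding the four defining identities of $\mbox{comm}_{w}$ in the same way gives $x\in\mbox{comm}_{w}(a)\Leftrightarrow N\in\mbox{comm}_{w}(T)$. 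Moreover $N^{p}=L_{a^{p}}=0$ whenever $a^{p}=0$, so $N$ is nilpotent, and $a^{2}=0$ forces $N^{2}=0$. Hence the hypothesis of (i) is exactly that of Proposition \ref{propn.aaa}(ii) with $\sigma_{*}=\sigma$, and the hypothesis of (ii) is exactly that of Proposition \ref{propn.aaa}(v); applying those propositions and reading the results through the identity $\sigma_{\A}(\cdot)=\sigma_{L(\A)}(L_{\cdot})$ yields $\sigma(x)\setminus\{0\}\subset\sigma(x+a)\setminus\{0\}$ (with equality when $a^{2}=0$) and $\sigma(x)=\sigma(x+a)$, respectively.

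The only genuine care-points, which I would treat as the main obstacle, are two bookkeeping matters rather than any deep difficulty. The first is the reverse inclusion $\sigma_{\A}(x)\subset\sigma(L_{x})$, where one must argue that a left multiplication operator is invertible only when the element itself is two-sidedly invertible (the step promoting a right inverse to a two-sided inverse via injectivity above). The second is a dimension convention: Proposition \ref{propn.aaa} is phrased for $L(X)$ with $X$ infinite dimensional, so if $\dim_{\C}\A<\infty$ I would first note that the spectral statements used carry over verbatim to $L(\A)$ (their proofs for the full spectrum $\sigma$ are purely algebraic and dimension-free), or equivalently replace $\A$ by a unital isometric enlargement before applying the representation. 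With these two points settled, the corollary follows immediately.
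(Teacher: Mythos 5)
Your proposal is correct and follows essentially the same route as the paper: the paper also passes to the left regular representation $L_{x}(y)=xy$, notes that $L_{a}$ is nilpotent with $L_{a}^{2}=0$ when $a^{2}=0$, translates the hypotheses into $L_{x}\in\mbox{comm}(L_{a}L_{x})\cap\mbox{comm}(L_{x}L_{a})$ (resp.\ the $\mbox{comm}_{w}$ condition), and applies Proposition \ref{propn.aaa}. Your extra care about $\sigma_{\A}(x)=\sigma_{L(\A)}(L_{x})$ and the infinite-dimensionality convention is sound but only elaborates on details the paper takes for granted.
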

\begin{proof} Consider  the operators   $L_{a}(y)=ay$ and $L_{x}(y)=xy.$  We have  $L_{x}\in L(\A),$  $L_{a}\in \mbox{Nil}(L(\A))$ and $\sigma(x)=\sigma(L_{x}).$ Remark   that if $a^{2}=0,$ then $L_{a}^{2}=L_{a^{2}}=0.$ And if $x \in\mbox{comm}(ax)\cap\mbox{comm}(xa)$ (resp.,  $x \in\mbox{comm}_{w}(a)$), then $L_{x} \in\mbox{comm}(L_{a}L_{x})\cap\mbox{comm}(L_{a}L_{x})$ (resp.,  $L_{x} \in\mbox{comm}_{w}(L_{a})$). By applying   Proposition \ref{propn.aaa}, we get the desired results.
\end{proof}
\medskip

In this paragraph we present the  {\it Construction of Sadovskii/Buoni, Harte, Wickstead} \cite{buoni,mullerbook,sadovskii}, which will play an important role in the next.   The space   $m(X)$   consisting of the relatively compact sequences  of $X$   is a closed  subspace of $\ell^{\infty}(X).$ Consider   $P(T) \in L(\mathcal{P}(X))$  the operator defined by $P(T)(x+m(X))=(Tx_{n})_{n}+m(X),$ where  $x=(x_{n}) \in \ell^{\infty}(X)$ and $\mathcal{P}(X)=\ell^{\infty}(X)/m(X).$   The  mapping $T$ $\longrightarrow$ $P(T)$  of $L(X)$ into $L(\mathcal{P}(X))$  is a unital homomorphism  with kernel  $K(X)$ 	  and  induces a norm decreasing monomorphism from $L(X)/K(X)$ to $L(X).$ Moreover, $\|P(T)\|\leq \|T\|,$ $\sigma_{uf}(T)=\sigma_{a}(P(T)),$ $\sigma_{lf}(T)=\sigma_{s}(P(T))$ and $\sigma_{e}(T)=\sigma(P(T)).$
\begin{prop}\label{propn.aaaa}
Let  $T,K \in L(X)$   such that   $K$ is a power compact operator. The following assertions hold:\\
(i)  If $T \in \mbox{comm}(KT),$ then  $\sigma_{*}(T)\setminus\{0\}\subset \sigma_{*}(T+K)\setminus\{0\},$ where $\sigma_{*}\in\{\sigma_{uf},\sigma_{uw}\},$   and if in addition   $K^{2}$ is compact,  then   $\sigma_{**}(T)\setminus\{0\}= \sigma_{**}(T+K)\setminus\{0\},$ where $\sigma_{**}\in\{\sigma_{uf},\sigma_{uw},\sigma_{ub}\}.$ While  if   $T \in \mbox{comm}(TK),$ then $\sigma_{+}(T)\setminus\{0\}\subset \sigma_{+}(T+K)\setminus\{0\},$ where $\sigma_{+}\in\{\sigma_{lf},\sigma_{lw}\},$    and   if in addition   $K^{2}$ is compact,  then $\sigma_{++}(T)\setminus\{0\}= \sigma_{++}(T+K)\setminus\{0\},$ where $\sigma_{++}\in\{\sigma_{lf},\sigma_{lw},\sigma_{lb}\}.$  \\
(ii)  If $T \in \mbox{comm}(KT)\cap\mbox{comm}(TK),$ then   $\sigma_{*}(T)\setminus\{0\}\subset \sigma_{*}(T+K)\setminus\{0\},$ where $\sigma_{*}\in\{\sigma_{uf},\sigma_{uw},\sigma_{lf},\sigma_{lw},\\\sigma_{e},\sigma_{w}\},$   and if in addition   $K^{2}$ is compact,  then   $\sigma_{**}(T)\setminus\{0\}=\sigma_{**}(T+K)\setminus\{0\},$ where $\sigma_{**}\in\{\sigma_{uf},\sigma_{uw},\sigma_{ub},\sigma_{lf},\sigma_{lw},\sigma_{lb},\sigma_{e},\sigma_{w},\sigma_{b}\}.$\\
(iii)  If      $K\in\mbox{comm}_{r}(T),$ then $\sigma_{*}(T)\setminus\{0\}=\sigma_{*}(T+K)\setminus\{0\},$ where $\sigma_{*}\in\{\sigma_{uf},\sigma_{uw},\sigma_{ub}\}.$ \\
(iv)  If $K\in\mbox{comm}_{l}(T),$ then $\sigma_{*}(T)\setminus\{0\}=\sigma_{*}(T+K)\setminus\{0\},$ where $\sigma_{*}\in\{\sigma_{lf},\sigma_{lw},\sigma_{lb}\}.$\\
(v) If $K\in\mbox{comm}_{w}(T),$ then $\sigma_{*}(T)=\sigma_{*}(T+K),$  where $\sigma_{*}\in\{\sigma_{e},\sigma_{w},\sigma_{b},\sigma_{gd},\sigma_{g_{z}d}\}.$

\end{prop}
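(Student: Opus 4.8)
The engine of the proof is the construction $P:L(X)\to L(\mathcal{P}(X))$ recalled just above the statement. Since $K$ is power compact, $K^{m}$ is compact for some $m$, and because $K(X)$ is the kernel of the unital homomorphism $P$ we get $P(K)^{m}=P(K^{m})=0$; thus \emph{$P(K)$ is nilpotent}, and when $K^{2}$ is compact we even have $P(K)^{2}=0$. Being a homomorphism, $P$ transports every hypothesis verbatim: from $TKT=KT^{2}$ one reads $P(T)P(K)P(T)=P(K)P(T)^{2}$, i.e. $P(T)\in\mbox{comm}(P(K)P(T))$, and likewise $K\in\mbox{comm}_{r}(T)$, $K\in\mbox{comm}_{l}(T)$, $K\in\mbox{comm}_{w}(T)$ pass to $P(K)\in\mbox{comm}_{r}(P(T))$, and so on. Feeding $(P(T),P(K))$ into Proposition \ref{propn.aaa} in the algebra $L(\mathcal{P}(X))$ and reading the output through the dictionary $\sigma_{uf}=\sigma_{a}\circ P$, $\sigma_{lf}=\sigma_{s}\circ P$, $\sigma_{e}=\sigma\circ P$ (together with $P(T+K)=P(T)+P(K)$) yields at once every statement about $\sigma_{uf},\sigma_{lf},\sigma_{e}$; the clause ``$K^{2}$ compact'' is exactly ``$P(K)^{2}=0$'', which switches on the equality halves of Proposition \ref{propn.aaa}.

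The Weyl spectra carry an index constraint that $P$ cannot detect, so the index must be controlled inside $L(X)$. The plan is a homotopy argument: a direct check shows $tK$ satisfies the same hypothesis as $K$ for every scalar $t$ (e.g. $T\in\mbox{comm}((tK)T)$ because $T(tK)T=t\,KT^{2}=(tK)T^{2}$, and $tK\in\mbox{comm}_{r}(T)$ whenever $K\in\mbox{comm}_{r}(T)$). In the two-sided regimes $\mbox{comm}_{r},\mbox{comm}_{l},\mbox{comm}_{w}$ the already-proved \emph{equalities} of the semi-Fredholm spectra then hold for every $T+tK$, so for a fixed $\lambda$ (any $\lambda$ in the $\mbox{comm}_{w}$ case, $\lambda\neq0$ otherwise) the whole segment $\{T+tK-\lambda I:t\in[0,1]\}$ lies in the upper (resp. lower) semi-Fredholm region. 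There $t\mapsto\mbox{ind}(T+tK-\lambda I)$ is integer valued and locally constant, hence constant on the connected set $[0,1]$, giving $\mbox{ind}(T-\lambda I)=\mbox{ind}(T+K-\lambda I)$ and therefore the equalities for $\sigma_{uw},\sigma_{lw},\sigma_{w}$ in (iii)--(v). In the one-sided inclusion cases (i)--(ii) the semi-Fredholm region need not be constant along the segment; here one settles for preservation of the sign of the index, combining the $\sigma_{uf}$-inclusion with the kernel inclusions $\mathcal{N}(T-\lambda I)\subset\mathcal{N}((T+N-\lambda I)^{p})$ of Proposition \ref{propkernel} transported through $P$.

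The Browder spectra become available precisely when $P(K)^{2}=0$, and there the extra ingredient beyond semi-Fredholmness is finiteness of ascent/descent. These are read off from the stabilization of the chains $\mathcal{N}((T-\lambda I)^{n})$ and $\mathcal{R}((T-\lambda I)^{n})$, which is propagated by the kernel inclusions of Proposition \ref{propkernel} and their range-duals (again transported through $P$, where $P(K)$ is nilpotent of degree $\le2$); this gives preservation of left/right Drazin invertibility and hence of $\sigma_{ub},\sigma_{lb},\sigma_{b}$. For part (v) the symmetric hypothesis $K\in\mbox{comm}_{w}(T)$ also lets one run the whole argument with the roles of $T$ and $T+K$ exchanged, which is what removes the puncture at $0$ and delivers the \emph{full} equalities for $\sigma_{e},\sigma_{w},\sigma_{b}$.

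Finally, for $\sigma_{gd}=\mbox{acc}\,\sigma$ and $\sigma_{g_{z}d}=\mbox{acc}\,\mbox{acc}\,\sigma$ one leaves the reach of $P$ entirely and uses that a power compact operator is a Riesz operator: every nonzero point of $\sigma(T)$ and of $\sigma(T+K)$ is then an isolated pole, so the two full spectra can differ only by isolated points with the same derived set; combined with the already proved $\sigma_{b}(T)=\sigma_{b}(T+K)$ this forces $\mbox{acc}\,\sigma(T)=\mbox{acc}\,\sigma(T+K)$, and one more application to the derived sets gives the $\sigma_{g_{z}d}$ equality. \textbf{The main obstacle} is exactly this last part together with the index bookkeeping for the Weyl spectra: both lie outside the homomorphism $P$ and must be handled by index continuity (clean in the symmetric cases, delicate in the one-sided cases (i)--(ii)) and by Riesz perturbation theory for the accumulation points.
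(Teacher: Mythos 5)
Your overall strategy coincides with the paper's: transport everything through the Buoni--Harte--Wickstead homomorphism $P$, observe that $P(K)$ is nilpotent (of degree $\le 2$ when $K^{2}$ is compact), apply Proposition \ref{propn.aaa} to the pair $(P(T),P(K))$, and read off the semi-Fredholm and essential spectra via $\sigma_{uf}=\sigma_{a}\circ P$, $\sigma_{lf}=\sigma_{s}\circ P$, $\sigma_{e}=\sigma\circ P$; for the Weyl spectra the paper invokes ``the same argument as Oberai,'' which is exactly your homotopy $t\mapsto T+tK$ together with local constancy of the index. Up to and including the Browder spectra your write-up is essentially the paper's proof (the paper passes through $\sigma_{ub}=\sigma_{uw}\cup\mbox{iso}\,\sigma_{a}$ where you use stabilization of the kernel and range chains; a cosmetic difference). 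One caveat: in the one-sided cases (i)--(ii) your fallback for $\sigma_{uw}$ --- ``preservation of the sign of the index'' extracted from the kernel inclusions of Proposition \ref{propkernel} --- is not a proof, since those inclusions control nullities only and an index is a difference of a nullity and a deficiency; the paper is equally terse here (it simply cites Oberai), so this is a shared weakness rather than a divergence.

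The genuine gap is in your last paragraph, on $\sigma_{gd}$ and $\sigma_{g_{z}d}$, which the paper leaves to the reader and which you therefore must actually prove. The assertion that, $K$ being Riesz, ``every nonzero point of $\sigma(T)$ and of $\sigma(T+K)$ is an isolated pole'' is false: the Riesz property of $K$ constrains $\sigma(K)$, not $\sigma(T)$ (take $T$ with a disk as spectrum and $K=0$). What is true, using $\sigma_{b}(T)=\sigma_{b}(T+K)$, is that the symmetric difference $\sigma(T)\,\triangle\,\sigma(T+K)$ consists of Riesz points, hence of isolated points of the respective spectra. But this alone does not yield $\mbox{acc}\,\sigma(T)=\mbox{acc}\,\sigma(T+K)$: a point $\lambda$ could be the limit of a sequence of isolated Riesz points of $\sigma(T+K)$ all lying outside $\sigma(T)$, while being only an isolated (non-Browder) point of $\sigma(T)$; nothing in your argument excludes this configuration. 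Closing the $\sigma_{gd}$ and $\sigma_{g_{z}d}$ claims requires a genuine argument at the level of generalized Drazin invertibility (for instance, showing that $T-\lambda I$ admits the relevant quasi-nilpotent/invertible type decomposition if and only if $T+K-\lambda I$ does), not just set-theoretic bookkeeping of isolated points.
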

\begin{proof}
(i)   $T \in \mbox{comm}(KT)$    implies that $P(T)\in\mbox{comm}(P(K)P(T)).$  Let    $p\geq 1$  such that $K^{p}$ is compact, we have      $P(0)=P(K^{p})=P(K)^{p},$ and  so   $P(K)$ is   nilpotent.  From  \cite[Theorem 2]{buoni} and Proposition \ref{propn.aaa},  $\sigma_{uf}(T)\setminus\{0\}=\sigma_{a}(P(T))\setminus\{0\}\subset\sigma_{a}(P(T)+P(K))\setminus\{0\}=\sigma_{a}(P(T+K))\setminus\{0\}=\sigma_{uf}(T+K)\setminus\{0\}.$  Let $\lambda \notin \sigma_{uw}(T+K)\setminus\{0\},$ then $\lambda \notin  \sigma_{uf}(T)\setminus\{0\}.$   By using  the same argument as Oberai in \cite[Lemma 2]{oberai},  we get that $\mbox{ind}(T+K-\lambda I)=\mbox{ind}(T-\lambda  I)$ and  thus $\lambda \notin \sigma_{uw}(T)\setminus\{0\}.$  Therefore $\sigma_{uw}(T)\setminus\{0\}\subset\sigma_{uw}(T+K)\setminus\{0\}.$  Since   $\sigma_{ub}(T)=\sigma_{uw}(T)\cup\mbox{iso}\,\sigma_{a}(T)$ then if in addition   $K^{2}$ is compact,     $\sigma_{ub}(T)\setminus\{0\}= \sigma_{ub}(T+K)\setminus\{0\}.$ The rest of the proof is   clear and is left to the reader.
\end{proof}

We recall that   $T\in L(X)$ is said to have the SVEP  at $\lambda\in\mathbb{C}$ if  for every open neighborhood $U_\lambda$ of $\lambda,$ the  function $f\equiv 0$ is the only  analytic solution of the equation $(T-\mu I)f(\mu)=0\quad\forall\mu\in U_\lambda.$
\begin{lem}\label{lemf.1}  Let  $T \in L(X).$ Then   $T$ is quasi-nilpotent if and only if  $\sigma_{*}(T)=\{0\},$ where  $\sigma_{*}\in \{\sigma_{a},\sigma_{s}\}.$
\end{lem}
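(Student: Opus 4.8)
The plan is to prove both implications of the equivalence $T$ quasi-nilpotent $\iff \sigma_*(T)=\{0\}$ for $\sigma_*\in\{\sigma_a,\sigma_s\}$, using the standard relationship between the approximate point spectrum, the surjectivity spectrum, and the full spectrum. Recall that $T$ is quasi-nilpotent means $r(T)=0$, i.e. $\sigma(T)=\{0\}$.

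First I would handle the forward direction. Suppose $T$ is quasi-nilpotent, so $\sigma(T)=\{0\}$. Since $\sigma_a(T)$ and $\sigma_s(T)$ are always nonempty closed subsets of $\sigma(T)$ (the approximate point spectrum contains the topological boundary of the spectrum, hence is nonempty, and dually for the surjectivity spectrum), the inclusions $\emptyset\neq\sigma_a(T)\subset\sigma(T)=\{0\}$ and $\emptyset\neq\sigma_s(T)\subset\sigma(T)=\{0\}$ force $\sigma_a(T)=\sigma_s(T)=\{0\}$. This direction is essentially immediate from the general containments.

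For the converse, suppose $\sigma_*(T)=\{0\}$. I would argue that whichever of the two spectra equals $\{0\}$, the full spectrum is forced to be $\{0\}$ as well. The cleanest route is to use the boundary inclusion $\partial\sigma(T)\subset\sigma_a(T)$ together with its dual $\partial\sigma(T)\subset\sigma_s(T)$; since $\partial\sigma(T)\subset\sigma_*(T)=\{0\}$, the boundary of $\sigma(T)$ is contained in $\{0\}$. Because $\sigma(T)$ is a nonempty compact subset of $\C$ whose topological boundary is a single point, $\sigma(T)$ must itself be that single point, giving $\sigma(T)=\{0\}$ and hence $r(T)=0$, i.e. $T$ is quasi-nilpotent. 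One must treat both cases $\sigma_*=\sigma_a$ and $\sigma_*=\sigma_s$; for $\sigma_s$ the natural device is duality, passing to $T^*$ and using $\sigma_s(T)=\sigma_a(T^*)$ together with $\sigma(T)=\sigma(T^*)$, which reduces the surjectivity case to the approximate-point case already settled.

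The main obstacle, and the only genuinely nontrivial point, is justifying that a nonempty compact set in $\C$ with single-point boundary is itself a single point. This follows because a nonempty compact subset $K\subset\C$ satisfying $\partial K\subset\{0\}$ has $K\setminus\{0\}$ both open and closed in $\C$; if $K\neq\{0\}$ then $K\setminus\{0\}$ would be a nonempty compact open subset of $\C$, which is impossible since $\C$ is connected. Hence $K=\{0\}$. I expect the author's proof to dispatch the forward direction in one line and to invoke the boundary inclusions (and duality for the surjectivity spectrum) for the converse.
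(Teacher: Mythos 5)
Your proof is correct and follows essentially the same route as the paper's: the paper's entire argument is the inclusion $\partial\sigma(T)\subset\sigma_{*}(T)$ (which it justifies by citing Aiena's SVEP theorems and the fact that $T$ and $T^{*}$ have SVEP on $\partial\sigma(T)$), combined with the nonemptiness of $\sigma_{*}(T)\subset\sigma(T)$ and the observation that a nonempty compact subset of $\C$ whose boundary lies in $\{0\}$ must equal $\{0\}$ --- exactly your two directions. The one step to tighten is the topological one: $\sigma(T)\setminus\{0\}$ is not obviously closed (hence not obviously compact) in $\C$, so run the clopen argument in the connected space $\C\setminus\{0\}$ instead, where $\sigma(T)\setminus\{0\}$ is closed (because $\sigma(T)$ is closed) and open (because it misses $\partial\sigma(T)$), hence empty since it is bounded.
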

\begin{proof}
Since  $\partial\sigma(T)\subset\sigma_{*},$ then the proof follows  from \cite[Theorem 2.97, Theorem 2.98]{Aiena1} and the fact that   $T$ and $T^{*}$  have the SVEP on the boundary $\partial\sigma(T).$
\end{proof}
Recall that $T\in L(X)$ is   Riesz   if $T-\lambda I$ is  Browder    for all non-zero complex  $\lambda,$  which is equivalent to say that $\pi(T):=T+K(X)$ is quasi-nilpotent in the Calkin algebra $L(X)/K(X),$ where $K(X)$ is the  ideal  of all compact operators.
\begin{prop}\label{prop2f.1}   Let $T,R\in L(X)$ such that $R$ is Riesz. The following statements hold:\\
(i)   If $T \in \mbox{comm}(RT)\cap\mbox{comm}(TR)$ and        $T$ is Fredholm,  then      $\sigma_{e}(T)=\sigma_{e}(T+R)$ and $\sigma_{w}(T)=\sigma_{w}(T+R).$\\
(ii)  If   $T \in \mbox{comm}(TR)$  and      $T$ is upper semi-Fredholm, then   $\sigma_{uf}(T)=\sigma_{uf}(T+R)$ and $\sigma_{uw}(T)=\sigma_{uw}(T+R).$\\
(iii)  If  $T \in \mbox{comm}(RT)$  and      $T$ is lower semi-Fredholm, then   $\sigma_{lf}(T)=\sigma_{lf}(T+R)$ and $\sigma_{lw}(T)=\sigma_{lw}(T+R).$
\end{prop}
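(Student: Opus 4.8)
The plan is to transport everything into the algebra $L(\mathcal{P}(X))$ through the unital homomorphism $P(\cdot)$ of the Sadovskii construction, where $\sigma_{e}(T)=\sigma(P(T))$, $\sigma_{uf}(T)=\sigma_{a}(P(T))$ and $\sigma_{lf}(T)=\sigma_{s}(P(T))$. Writing $y=P(T)$ and $b=P(R)$, the homomorphism property gives $P(TR)=yb$, $P(RT)=by$ and $P(T+R)=y+b$, while the Riesz hypothesis means exactly that $\sigma(b)=\sigma_{e}(R)=\{0\}$, i.e. $b$ is quasi-nilpotent (so $\sigma_{a}(b)=\sigma_{s}(b)=\{0\}$ by Lemma \ref{lemf.1}). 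The decisive observation, common to all three parts, is that the $($semi-$)$Fredholm hypothesis on $T$ makes $y$ invertible, bounded below, or surjective, and this forces the one-sided condition on $T$ to \emph{collapse to genuine commutativity} of $y$ and $b$. Indeed, in part (i), $T$ Fredholm gives $y$ invertible, and $T\in\mbox{comm}(TR)$ reads $y^{2}b=yby$; left-multiplying by $y^{-1}$ yields $yb=by$. In part (ii), $T$ upper semi-Fredholm gives $y$ bounded below hence injective, and the same relation $y^{2}b=yby$ is left-cancellable to $yb=by$. In part (iii), $T$ lower semi-Fredholm gives $y$ surjective hence right-cancellable, and $T\in\mbox{comm}(RT)$ reads $yby=by^{2}$, which right-cancels to $yb=by$. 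Note the point: the hypothesis is only used once (at $\lambda=0$), but the resulting relation $yb=by$ is independent of $\lambda$, so $b$ commutes with $y-\lambda I$ for \emph{every} $\lambda$.

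With commutativity in hand, the Fredholm-spectrum equalities are purely a matter of a commuting quasi-nilpotent perturbation. For (i), subadditivity of the spectrum on commuting elements gives $\sigma(y+b)\subseteq\sigma(y)+\sigma(b)=\sigma(y)$, and symmetrically (applying it to $(y+b)+(-b)$) $\sigma(y)\subseteq\sigma(y+b)$; hence $\sigma_{e}(T)=\sigma(y)=\sigma(y+b)=\sigma_{e}(T+R)$. For (ii) and (iii) one invokes the corresponding commutative result that a commuting quasi-nilpotent perturbation leaves the approximate point spectrum, respectively the surjectivity spectrum, unchanged: $\sigma_{a}(y+b)=\sigma_{a}(y)$ and $\sigma_{s}(y+b)=\sigma_{s}(y)$, which translate into $\sigma_{uf}(T)=\sigma_{uf}(T+R)$ and $\sigma_{lf}(T)=\sigma_{lf}(T+R)$.

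It remains to treat the index, i.e. the Weyl-type spectra. Fix $\lambda$ outside the relevant Fredholm spectrum and consider the norm-continuous affine path $t\mapsto T+tR-\lambda I$, $t\in[0,1]$. Its image under $P$ is $(y-\lambda I)+tb$, a commuting quasi-nilpotent perturbation of $y-\lambda I$; applying the same stability as above shows $0$ lies in none of $\sigma(\cdot)$, $\sigma_{a}(\cdot)$, $\sigma_{s}(\cdot)$ along the whole path, so each $T+tR-\lambda I$ remains Fredholm (resp. upper, lower semi-Fredholm). Since the index is locally constant on the connected set of $($semi-$)$Fredholm operators, $\mbox{ind}(T-\lambda I)=\mbox{ind}(T+R-\lambda I)$. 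Combining this with the Fredholm-spectrum equalities already obtained, $\lambda\in\sigma_{w}(T)$ iff the index is nonzero iff $\lambda\in\sigma_{w}(T+R)$, and analogously for $\sigma_{uw}$ and $\sigma_{lw}$ using the semi-Fredholm index; this completes all three parts.

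I expect the main obstacle to be the commuting quasi-nilpotent stability of the \emph{one-sided} spectra $\sigma_{a}$ and $\sigma_{s}$ needed in (ii) and (iii), together with the index-continuity step. Unlike the full spectrum in (i), these are not accessible through the Berberian extension by reducing to point spectrum, since the point spectrum is genuinely \emph{not} stable under a commuting quasi-nilpotent perturbation (e.g. $y=0$ perturbed by a quasi-nilpotent injective operator). One must instead treat $\sigma_{a}$ and $\sigma_{s}$ as the spectra attached to the bounded-below and surjective regularities and establish their subadditivity on commuting elements, which is exactly the classical commutative perturbation input that the rest of the argument is designed to reduce to.
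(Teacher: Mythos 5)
Your proposal is correct and follows essentially the same route as the paper: push everything through the Buoni--Harte--Wickstead homomorphism $P(\cdot)$ (the paper uses the Calkin algebra for part (i), which is equivalent here), use injectivity/surjectivity/invertibility of $P(T)$ to cancel the one-sided relation down to genuine commutativity of $P(T)$ and the quasi-nilpotent $P(R)$ (the paper invokes its Corollary \ref{corn.1} for exactly this), and then apply commuting quasi-nilpotent stability of $\sigma$, $\sigma_{a}$, $\sigma_{s}$ plus constancy of the index along the semi-Fredholm family $T+\mu R-\lambda I$ (the paper cites Goldberg where you invoke local constancy of the index along the path). No gaps; the differences are cosmetic.
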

\begin{proof} (i) Assume that  $T$ is Fredholm.  By   Atkinson theorem we get  that  $\pi(T)$ is invertible in the Calkin algebra. As  $TRT\in \{T^{2}R,RT^{2}\}$  then $\pi(T)\pi(R)=\pi(R)\pi(T)$ and   thus $TR-RT\in K(X).$ Since $R$ is Riesz then $\pi(R)$ is quasi-nilpotent, and hence  $\sigma_{e}(T)=\sigma(\pi(T))=\sigma(\pi(T+R))=\sigma_{e}(T+R).$ And thus $\sigma_{w}(T)=\sigma_{w}(T+R).$\\
(ii)  If  $T$ is upper semi-Fredholm, then from \cite{buoni} the operator $P(T)\in \ell^{\infty}(X)/m(X)$ defined above is bounded below.    As $TRT=T^{2}R,$ from Corollary \ref{corn.1},   $P(T)P(R)=P(R)P(T).$ Thus $P(TR-RT)=P(0),$ so  that $TR-RT\in K(X).$  On the other hand, since   $R$ is Riesz then  $\sigma_{a}(P(R))=\sigma_{uf}(R)=\{0\}$ and this implies from Lemma \ref{lemf.1}  that $P(R)$  is quasi-nilpotent. Hence $\sigma_{uf}(T)=\sigma_{a}(P(T))=\sigma_{a}(P(T+R))=\sigma_{uf}(T+R).$    Let $\lambda \notin \sigma_{uw}(T),$ then $\lambda \notin \sigma_{uf}(T)=\sigma_{uf}(T+\mu R)$ for all $\mu \in \C.$   From \cite[Theorem V.I.8]{Goldberg}, we deduce that $\alpha(T-\lambda I)=  \alpha((T+\mu R)-\lambda I)$ and $\beta(T-\lambda I)= \beta((T+\mu R)-\lambda I)$ for all $\mu \in \C.$ Hence $\mbox{ind}(T-\lambda I)=\mbox{ind}((T+ R)-\lambda I)$ for all $\mu \in \C.$ Consequently,  $\lambda \notin \sigma_{uw}(T+R).$  The point  (iii) goes similarly.\\
\end{proof}

\goodbreak
{\small \noindent Zakariae Aznay,\\  Laboratory (L.A.N.O), Department of Mathematics,\\Faculty of Science, Mohammed I University,\\  Oujda 60000 Morocco.\\
aznay.zakariae@ump.ac.ma\\

\noindent Abdelmalek Ouahab,\newline Laboratory (L.A.N.O), Department of
	Mathematics,\newline Faculty of Science, Mohammed I University,\\
	\noindent Oujda 60000 Morocco.\\
	\noindent ouahab05@yahoo.fr\\

 \noindent Hassan  Zariouh,\newline Department of
Mathematics (CRMEFO),\newline
 \noindent and laboratory (L.A.N.O), Faculty of Science,\newline
  Mohammed I University, Oujda 60000 Morocco.\\
 \noindent h.zariouh@yahoo.fr

\end{document}